\documentclass[12pt]{amsart}
\usepackage{setspace}
\usepackage{latexsym}
\usepackage{amssymb}
\usepackage{amsthm}
\usepackage{amsfonts}
\usepackage{amsmath}
\usepackage{amscd}
\usepackage{mathrsfs}
\usepackage{enumerate}
\usepackage{url, hyperref, doi}
\hypersetup{citecolor=blue, linkcolor=blue, colorlinks=true}
\usepackage{longtable, float, booktabs, cancel, cite, tikz, pgfmath, mathtools}
\usepackage[margin=1in]{geometry}
\usepackage{colonequals}
\usepackage{verbatim}

\newcommand{\rthmname}{Theorem}
\newtheorem*{rthm}{\rthmname}
\newenvironment{restatethm}[1]{\renewcommand{\rthmname}{Theorem #1}
\begin{rthm}
}{\end{rthm}}
\newtheorem{thm}{Theorem}[section]
\newtheorem{cor}[thm]{Corollary}
\newtheorem{conj}[thm]{Conjecture}
\newtheorem{lem}[thm]{Lemma}

\newtheorem{prop}[thm]{Proposition}
\theoremstyle{definition}\newtheorem{defn}[thm]{Definition}
\theoremstyle{definition}\newtheorem{rem}[thm]{Remark}
\theoremstyle{definition}\newtheorem{ex}[thm]{Example}
\numberwithin{equation}{thm}

\renewcommand\le{\leqslant}
\renewcommand\ge{\geqslant}

\newcommand{\N}{\mathbb{N}}

\newcommand{\Z}{\mathbb{Z}}

\newcommand{\id}{{\rm id}}
\newcommand{\LCM}{{\rm LCM}}
\newcommand{\fix}{{\rm fix}}
\newcommand{\sgn}{\rm sgn}
\begin{document}

\title{Circular sorting in the alternating group}

\author{Melanie J. Ferreri}
\address[M. J. Ferreri]{Department of Mathematics, William \& Mary, Williamsburg, VA 23187}
\email{\textcolor{blue}{\href{mailto:mjferreri@wm.edu}{mjferreri@wm.edu}}}

\author{Eric Swartz}
\address[E. Swartz]{Department of Mathematics, William \& Mary, Williamsburg, VA 23187}
\email{\textcolor{blue}{\href{mailto:easwartz@wm.edu}{easwartz@wm.edu}}}

\author{Nicholas J. Werner}
\address[N. J. Werner]{Department of Mathematics, Computer and Information Science, SUNY at Old Westbury, Old Westbury, NY 11568}
\email{\textcolor{blue}{\href{mailto:wernern@oldwestbury.edu}{wernern@oldwestbury.edu}}}

\begin{abstract}
    The symmetric group $S_n$ is generated by transpositions, and problems of sorting permutations using transpositions are well studied. In recent work, Adin, Alon, and Roichman 
    studied the related problem of sorting $n$ points on a circle, and gave a formula for the maximum number of adjacent swaps required. This is equivalent to the number of adjacent transpositions required to transform any permutation into a power of the cyclic permutation $(1,2,\ldots, n)$.
    
    The focus of this work is an analogous question in the alternating group $A_n$, which is generated by $3$-cycles. That is, using 3-cycles instead of transpositions, what is the maximum number of steps required to transform an even permutation into a power of $(1,2,\ldots, n)$ in the alternating group? We determine this number exactly for even $n$ and $n \equiv 1 \pmod{4}$. For $n \equiv 3 \pmod{4}$, we show that the sorting number can take one of two possible values and give explicit constructions demonstrating that the larger value occurs infinitely often.
\end{abstract}

\maketitle

\section{Introduction}\label{sec: introduction}

\noindent\textbf{Notation and Conventions}. Throughout, $n$ is a positive integer greater than 1 and $\Z_n := \Z/n\Z = \{0, 1, \ldots, n-1\}$ is the ring of integers modulo $n$. The unit group of $\Z_n$ is denoted by $\Z_n^\times$. As usual, $S_n$ and $A_n$ are the symmetric and alternating groups, respectively, of degree $n$. We will assume that permutations from these groups act on $\Z_n$ rather than on $\{1, \ldots, n\}$. Permutations are composed from left to right, so for example $(0 , 1)(1 ,2) = (0 , 2 , 1)$. We use $\id$ to denote the identity permutation.\\

As is well known, the symmetric group $S_n$ is generated by 2-cycles, or transpositions. A classical problem of permutations is as follows: Given a permutation $\pi$ of $n$ elements, how many simple transpositions would need to be applied to $\pi$ in order to produce the identity permutation? That is, if we write down the one-line notation of $\pi$ and apply a sequence of transpositions, how many transpositions could we possibly need to get all the numbers back into ``natural'' order?  

Sorting problems of this kind are common in the literature \cite{GyoriTuran, FengETAL, vanZuylenETAL, ChenSkiena} and have applications to molecular biology and studies of gene sequences \cite{BafnaPevzner, Christie, EliasHartman}. One variation that has received recent attention is the \emph{circular} sorting problem, where instead of sorting numbers on a line, we sort numbers on a circle. In this scenario, the numbers are considered sorted as long as they appear in their natural order clockwise on a circle, up to rotations. Let $c = (0,1,\ldots, n-1)$. Then, the elements of $\langle c \rangle$ form the equivalence class (under rotation) of the trivial cyclic permutation. 
Given a permutation $\pi$ in $S_n$, we define the \emph{sorting number of $\pi$} to be the minimum number of two-cycles needed to apply to $\pi$ in order to transform it into some power of $c$.
The circular sorting problem is then to find the maximal sorting number over all
permutations in $S_n$. 
This was studied by Adin, Alon, and Roichman in \cite{AdinAlonRoichman2025}, where they gave exact sorting numbers when the only transpositions allowed are those involving adjacent elements. The variation on this problem in which non-adjacent swaps are allowed was also considered in this work, as well as  by Bastide, Bishnoi, Groenland, Gijswijt, and Joshi in \cite{bastide2025circularsortingstrongcomplete}, who provided new lower bounds. 

In the present work, we examine an analogous sorting problem in which the permutation $\pi$ lies in the alternating group $A_n$, and we wish to resolve a circular sort by applying 3-cycles instead of transpositions. 

\begin{defn}\label{def: h}
When $n$ is odd, we let $c=(0 , 1 , \ldots , n-1)$, and when $n$ is even we take $c=(0 , 1 , \ldots , n-1)^2$. This ensures that $c \in A_n$ in all cases. Given $\pi \in A_n$, we denote the coset $\langle c \rangle \pi$ by $[\pi]$.  We define $h(\pi)$ to be the minimum number of 3-cycles in a decomposition of $\pi$ into $3$-cycles, with the convention that $h(\id) = 0$. Also, we define
\begin{equation*}
h([\pi]) = \min_{\sigma \in [\pi]} h(\sigma)  = \min_{0 \le j \le n-1} h(c^j \pi) \quad \text{ and } \quad h(n) = \max_{\pi \in A_n} h([\pi]).
\end{equation*}
\end{defn}

With this notation, the circular sorting number of $\pi$ via 3-cycles is $h([\pi])$. This can be computed by examining the elements of the coset $[\pi]=\langle c \rangle\pi$, and our goal is to determine $h(n)$, which is the maximum possible circular sorting number among all elements of $A_n$.

\begin{ex}\label{ex: sorting}
We give an example of circular sorting with 3-cycles. Begin with the permutation of $\Z_7$ corresponding to $\pi = (0 , 1 , 4 , 5 , 6 , 3 , 2)$.
\begin{center}
\def\perm{{1,4,0,2,5,6,3}}
\begin{tikzpicture}
\node at (0,1.5) {$\Z_7$};
\draw (0,0) circle (0.75);
\foreach \x in {0,1,...,6} \node[circle,fill=black!100, inner sep = 1pt, minimum size = 4pt] 
at ({-0.75*cos(360*\x/7+90)},{0.75*sin(360*\x/7+90)}) {};

\foreach \x in {0,1,...,6} \node at ({-cos(360*\x/7+90)},{sin(360*\x/7+90)}) {$\x$};

\node at (2.5,0.5) {\large$\xrightarrow{(0 , 1 , 4 , 5 , 6 , 3 , 2)}$};

\node at (5,1.5) {$\Z_7$ under $\pi$};
\draw (5,0) circle (0.75);
\foreach \x in {0,1,...,6} \node[circle,fill=black!100, inner sep = 1pt, minimum size = 4pt] 
at ({-0.75*cos(360*\x/7+90)+5},{0.75*sin(360*\x/7+90)}) {};

\foreach \x in {0,1,...,6} \node at ({-cos(360*\x/7+90)+5},{sin(360*\x/7+90)}) {\pgfmathparse{\perm[\x]}\pgfmathresult};
\end{tikzpicture}
\end{center}
Here, $c=(0 , 1 , \ldots , 6)$. Compute the elements of the coset $[\pi] = \langle c \rangle \pi$ and decompose each element into 3-cycles. This gives
\begin{align*}
[\pi] &= \{(0 , 1 , 4 , 5 , 6 , 3 , 2), (0 , 4 , 6 , 1)(3 , 5), (1 , 2 , 5)(3 , 6 , 4), (0 , 2 , 6)(1 , 5 , 4),\\
&\hspace*{0.5in} (0 , 5)(1 , 6 , 2 , 3), (0 , 6 , 5 , 2 , 1 , 3 , 4), (0 , 3)(2 , 4) \}\\
&=\{(0 , 1 , 4)(0 , 5 , 6)(0 , 3 , 2),  (0 , 4 , 6)(0 , 1 , 3)(3 , 0 , 5), (1 , 2 , 5)(3 , 6 , 4), (0 , 2 , 6)(1 , 5 , 4),\\
&\hspace*{0.5in} (0 , 5 , 1)(1 , 0 , 6)(1 , 2 , 3), (0 , 6 , 5)(0 , 2 , 1)(0 , 3 , 4), (0 , 3 , 2)(2 , 0 , 4)\}
\end{align*}
Each of these decompositions into 3-cycles uses the minimum possible number of 3-cycles; see Lemma \ref{lem: computing h} and Remark \ref{rem: computing h}. We have $h([\pi])=h(c^2\pi)=2$, so we can use $(c^2\pi)^{-1}=(3 , 4 , 6)(1 , 5 , 2)$ to circularly sort $\Z_7$ and resolve the permutation done by $\pi$.
\begin{center}
\def\perm{{1,4,0,2,5,6,3}}
\def\tcone{{1,6,0,2,5,3,4}}
\def\tctwo{{5,6,0,1,2,3,4}}
\begin{tikzpicture}
\draw (0,0) circle (0.75);
\foreach \x in {0,1,...,6} \node[circle,fill=black!100, inner sep = 1pt, minimum size = 4pt] 
at ({-0.75*cos(360*\x/7+90)},{0.75*sin(360*\x/7+90)}) {};

\foreach \x in {0,1,...,6} \node at ({-cos(360*\x/7+90)},{sin(360*\x/7+90)}) {\pgfmathparse{\perm[\x]}\pgfmathresult};

\node at (2.5,0.5) {\large$\xrightarrow{(3 , 4 , 6)}$};

\draw (5,0) circle (0.75);
\foreach \x in {0,1,...,6} \node[circle,fill=black!100, inner sep = 1pt, minimum size = 4pt] 
at ({-0.75*cos(360*\x/7+90)+5},{0.75*sin(360*\x/7+90)}) {};

\foreach \x in {0,1,...,6} \node at ({-cos(360*\x/7+90)+5},{sin(360*\x/7+90)}) {\pgfmathparse{\tcone[\x]}\pgfmathresult};

\node at (7.5,0.5) {\large$\xrightarrow{(1 , 5 , 2)}$};

\draw (10,0) circle (0.75);
\foreach \x in {0,1,...,6} \node[circle,fill=black!100, inner sep = 1pt, minimum size = 4pt] 
at ({-0.75*cos(360*\x/7+90)+10},{0.75*sin(360*\x/7+90)}) {};

\foreach \x in {0,1,...,6} \node at ({-cos(360*\x/7+90)+10},{sin(360*\x/7+90)}) {\pgfmathparse{\tctwo[\x]}\pgfmathresult};

\end{tikzpicture}
\end{center}
In this example, the sorting step was done using two 3-cycles. Since both $c^3\pi$ and $c^6\pi$ can also be written using two 3-cycles, either of these permutations could have been used in place of $c^2\pi$ to perform the sorting step.
\end{ex}

We provide solutions for the sorting problem in $A_n$ for all $n$ which are congruent to $0,1,$ or $2 \pmod 4$. When $n \equiv 3 \pmod{4}$, we prove that there are only two possible values for $h(n)$. 

\begin{restatethm}{\ref{thm: n is even}}
Assume that $n$ is even and $n \ge 4$. Then, $h(n) = n/2$.
\end{restatethm}

\begin{restatethm}{\ref{thm: n=1 mod 4}}
Assume that $n \equiv 1 \pmod{4}$. Then, $h(n)=(n-1)/2$.
\end{restatethm}

\begin{restatethm}{\ref{thm: n=3 mod 4 bounds}}
Assume that $n \equiv 3 \pmod{4}$. Then, $h(n) = (n-3)/2$ or $h(n)=(n-1)/2$.
\end{restatethm}

    These theorems indicate that circular sorting by 3-cycles in $A_n$ is better behaved than circular sorting by transpositions in $S_n$. While the circular sorting number $t(n)$ for $S_n$ has been calculated for many values of $n$ \cite{bastide2025circularsortingstrongcomplete}, currently the best known 
    bounds for general $n$ are
\begin{equation*}
n-\lceil e \cdot( \ln n+1 ) \rceil \le t(n) \le n-2;
\end{equation*}
see \cite[Observation 3.1,Theorem 3.10]{AdinAlonRoichman2025}. In contrast, the circular sorting number for $A_n$ is always one of $(n-3)/2$, $(n-1)/2$, or $n/2$. (It should be noted that if sorting is done using only \emph{adjacent} transpositions, then the sorting number for $S_n$ is exactly $\left \lfloor (n-1)^2/4 \right \rfloor$ \cite[Theorem 1.1]{AdinAlonRoichman2025}.)

Theorems \ref{thm: n is even}, \ref{thm: n=1 mod 4}, and \ref{thm: n=3 mod 4 bounds} are proved in Section \ref{sec: main theorems}. The situation in which $n \equiv 3 \pmod 4$ is addressed in more detail in Sections \ref{sec: general n=3 mod 4} and \ref{sec: specific n=3 mod 4}. Although we do not know $h(n)$ exactly in all cases for which $n \equiv 3 \pmod{4}$, we are able to determine the sorting number for values of $n$ with certain prime factorizations.

\begin{restatethm}{\ref{thm:pqfulldiam}}
 Let $n = pq$, where $p$ and $q$ are distinct odd primes. Then, $h(n) = (n-1)/2$.
\end{restatethm}

\begin{restatethm}{\ref{thm: exponent sum}}
Let $n = p_1^{c_1} \cdots p_k^{c_k}$, where $p_1, \dots, p_k$ are distinct odd primes and each $c_i \ge 1$. Assume that either of the following two conditions holds:
\begin{enumerate}[(i)]
\item $c_1 + \cdots + c_k$ is even.
\item $c_1 + \cdots + c_k$ is odd and there exists $1 \le \ell \le k$ such that $p_\ell \equiv 1 \pmod 4$ and $c_\ell$ is odd.
\end{enumerate}
Then, $h(n) = (n-1)/2$.
\end{restatethm}

Our general approach to calculating $h(n)$ is to construct a \emph{witness} permutation for which all rotations require the maximal number of 3-cycles to sort. For values of $n$ not covered by Theorems \ref{thm: n is even}, \ref{thm: n=1 mod 4}, or \ref{thm: exponent sum}, we determine some sorting numbers computationally in Section \ref{sec: specific n=3 mod 4}. Here, some of the witnesses are found via exhaustive search with GAP \cite{GAP4}; others are motivated by constructions for orthomorphisms with specific cycle type given by Bors and Wang in \cite{BorsWang2022}. The smallest value of $n$ left unresolved by these methods was $n=31$. In Section \ref{sec: Claude}, using Claude Fable 5 \cite{claudefable2026}, Claude Opus 5 \cite{claudeopus2026}, and GAP for verification, new witnesses were found for this and other values of $n$. 
Moreover, Claude suggested a theorem (Theorem \ref{thm:compositewitnesstype}) that allows us to prove that $h(n)=(n-1)/2$ for additional values of $n$. All work in this paper completed with the assistance of generative AI is discussed in Section \ref{sec: Claude}.

\section{Main theorems}\label{sec: main theorems}

Recall that for $\pi \in A_n$, $h(\pi)$ is the minimum number of 3-cycles needed to express $\pi$ as a product of 3-cycles. We begin with some methods to calculate $h(\pi)$.

\begin{lem}\label{lem: computing h}\mbox{}
\begin{enumerate}[(1)]
\item Let $\pi \in A_n$ and let $m \ge 0$ be the number of cycles of odd length in a decomposition of $\pi$ into disjoint cycles. Consider each fixed point of $\pi$ to be a cycle of length 1, so that each fixed point contributes 1 towards $m$. Then, $h(\pi)=(n - m)/2$.
\item If $n$ is odd, then $h(n) \le (n-1)/2$.
\item If $n$ is even, then $h(n) \le n/2$.
\end{enumerate}
\end{lem}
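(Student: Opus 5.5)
For part (1), I will prove the upper bound $h(\pi)\le (n-m)/2$ and the lower bound $h(\pi)\ge (n-m)/2$ separately.

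For the upper bound I will decompose $\pi$ into disjoint cycles, counting fixed points as cycles of length $1$.
\begin{itemize}
\item An odd cycle of length $2k+1$ is a product of $k$ three-cycles: $(a_0,a_1,\ldots,a_{2k}) = (a_0,a_1,a_2)(a_0,a_3,a_4)\cdots(a_0,a_{2k-1},a_{2k})$. I will check this with the left-to-right convention, exactly as in Example~\ref{ex: sorting}.
\item Since $\pi$ is even, the number of even-length cycles is even, so I can pair them up.
\item A pair of cycles of lengths $2k$ and $2l$ can be written with $k+l$ three-cycles. The model case is $(a,b)(c,d)=(a,b,c)(b,c,d)$, up to orientation, which uses $2$ three-cycles for $4$ points.
\item In general I will first absorb all but two points of each even cycle using 3-cycles as in the odd case. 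This reduces the pair to $(a,b)(c,d)$ times $k-1+l-1$ three-cycles, giving $k+l$ in total.
\end{itemize}
Each odd cycle of length $\ell$ then costs $(\ell-1)/2$, and each pair of even cycles costs half its total length. Summing gives $(n-m)/2$.

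For the lower bound I will use a potential argument. Let $m(\sigma)$ denote the number of odd-length cycles of $\sigma$, with fixed points included. I will show that multiplying any $\sigma\in A_n$ by a 3-cycle changes $m$ by at most $2$. Since $m(\id)=n$, it follows that $\pi$ needs at least $(n-m(\pi))/2$ three-cycles. This is the main obstacle.

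I will use the standard fact that multiplying by a transposition either merges two cycles into one or splits one cycle into two. A 3-cycle is a product of two transpositions, so it suffices to track $m$ under two successive merge/split operations.
\begin{itemize}
\item Under a single transposition, $m$ changes by at most $2$ in absolute value. Merging odd+odd gives even, so $m$ drops by $2$. Merging odd+even gives odd, so $m$ is unchanged. Merging even+even gives even, so $m$ is unchanged. Splits are the reverse of these.
\item So naively two transpositions could change $m$ by $4$, and I must rule out a drop of $4$. That would require two consecutive odd+odd merges.
\item After the first such merge the new cycle is even. So the second merge must join two other odd cycles, disjoint from the first merged pair.
\item But the two transpositions $(a,b)(b,c)$ share the point $b$, so the second merge involves the cycle containing $b$, which is the new even cycle. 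This is a contradiction.
\item The symmetric argument with splits rules out an increase of $4$. The case of one merge and one split is bounded by $2$ directly.
\end{itemize}
So $|m(\sigma\tau)-m(\sigma)|\le 2$ for every 3-cycle $\tau$, which gives the lower bound.

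For parts (2) and (3), I will bound $h([\pi])\le h(\pi)$, which is at most $(n-m)/2$ by part (1). When $n$ is odd, $n$ is a sum of cycle lengths, so $\pi$ has at least one odd cycle, hence $m\ge1$ and $h(\pi)\le (n-1)/2$. When $n$ is even, $m\ge 0$ gives $h(\pi)\le n/2$. In both cases the bound holds for every $\pi$, so taking the maximum over $\pi\in A_n$ gives the bounds on $h(n)$.
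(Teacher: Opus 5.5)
Your proof is correct, but it takes a different route from the paper. The paper does not prove the formula from scratch. It quotes Herzog and Reid \cite[Corollary 2.4]{HerzogReid}, which gives $h(\pi)=\sum_i \lceil (L(\sigma_i)-1)/2\rceil$ over the disjoint cycles (fixed points included), and then observes that this sum equals $(n-m)/2$. Parts (2) and (3) are argued there exactly as you do.

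Your upper bound is essentially the constructive decomposition the paper records afterwards in Remark~\ref{rem: computing h}. One small point: with the paper's left-to-right convention, $(a,b,c)(b,c,d)=(a,c)(b,d)$. So your model identity needs the relabeling you allude to, e.g.\ $(a,b)(c,d)=(a,c,b)(c,b,d)$.

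The genuinely new ingredient is your lower bound. You show that right multiplication by a $3$-cycle changes the number of odd cycles by at most $2$, using the merge/split analysis and the fact that the two transpositions making up a $3$-cycle share a point. This is sound. The $+4$ case also follows for free from the $-4$ case by applying it to $\sigma\tau$ and the $3$-cycle $\tau^{-1}$.

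Your route gives a self-contained, elementary proof that explains why the number of odd cycles is the right invariant. The paper's route is shorter and places the statement inside the general theory of minimal factorizations into $k$-cycles, of which the $3$-cycle case is a special instance.
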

\begin{proof}
Given any cycle $\sigma$, let $L(\sigma)$ denote the length of $\sigma$. Express $\pi$ as a product of $t$ disjoint cycles $\sigma_1, \ldots, \sigma_t$, where we include a cycle of length 1 for each fixed point of $\pi$. With this convention, $\sum_{i=1}^t L(\sigma_i) = n$. By \cite[Corollary 2.4]{HerzogReid}, $h(\pi) = \sum_{i=1}^t \big\lceil(1/2)(L(\sigma_i)-1)\big\rceil$. Then,
\begin{align*}
h(\pi) &= \sum_{\substack{\sigma \text{ in } \pi,\\L(\sigma) \text{ odd}}} \dfrac{L(\sigma)-1}{2} + \sum_{\substack{\sigma \text{ in } \pi,\\L(\sigma) \text{ even}}} \dfrac{L(\sigma)}{2}\\
&=\dfrac{1}{2}\Big(\sum_{\sigma \text{ in } \pi} L(\sigma) - m\Big)\\
&=\dfrac{1}{2}(n-m),
\end{align*}
which proves (1).

For (2) note that when $n$ is odd, each permutation in $A_n$ must contain at least one cycle of odd length in any decomposition into disjoint cycles. Hence, $h(n) = \max_{\pi \in A_n} h([\pi]) \le (n-1)/2$. Similarly, when $n$ is even, it is possible for a permutation in $A_n$ to contain no cycles of odd length, so $h(n) \le n/2$ and (3) holds.
\end{proof}

\begin{rem}\label{rem: computing h}
To express an even permutation as a product of 3-cycles in an optimal way, we can use the following well-known decompositions.
\begin{enumerate}
\item Let $\sigma = (a_1 , a_2 , \ldots ,a_{2k+1})$ be a cycle of odd length $2k+1$, where $k \ge 1$. Then,
\begin{equation*}
\sigma = (a_1 , a_2 , \ldots a_{2k+1}) = (a_1 , a_2 , a_3)(a_1 , a_4 , a_5) \cdots (a_1 , a_{2k} , a_{2k+1}),
\end{equation*}
and $h(\sigma) = k$.

\item Let $\sigma = (a_1 , a_2 , \ldots , a_{\ell_1})(b_1 , b_2 , \ldots , b_{\ell_2})$ be a product of two disjoint cycles of even lengths $\ell_1 \ge 2$ and $\ell_2 \ge 2$. Form the following three permutations:
\begin{align*}
\sigma_1 &= (a_1 , a_2 , a_3)(a_1 , a_4 , a_5) \cdots (a_1 , a_{\ell_1-2} ,  a_{\ell_1-1}),\\
\sigma_2 &= (a_1 , a_{\ell_1} , b_1)(b_1 , a_1 , b_2), \text{ and}\\
\sigma_3 &= (b_1 , b_3 , b_4) (b_1 , b_5 , b_6) \cdots (b_1 , b_{\ell_2-1} ,  b_{\ell_2}).
\end{align*}
Then, $\sigma = \sigma_1 \sigma_2 \sigma_3$ and $h(\sigma) = (\ell_1+\ell_2)/2$.
\end{enumerate}
For a general permutation $\pi \in A_n$, we can first write $\pi$ as a product of disjoint cycles, and then apply the above decompositions. By \cite[Corollary 2.4]{HerzogReid}, this produces an expression for $\pi$ that uses the minimum possible number of 3-cycles. Further results on minimal decompositions of permutations into $k$-cycles can be found in \cite{HerzogReid} and \cite{HerzogReid2}.
\end{rem}

We will show that the upper bounds for $h(n)$ in Lemma \ref{lem: computing h} are always attained if $n$ is even or $n \equiv 1 \pmod{4}$. The case where $n \equiv 3 \pmod{4}$ is more delicate, and here our most general result is that $(n-3)/2 \le h(n) \le (n-1)/2$. Nevertheless, in Section \ref{sec: general n=3 mod 4} we will demonstrate that there are infinitely many $n \equiv 3 \pmod{4}$ for which $h(n)=(n-1)/2$.

\begin{prop}\label{prop: n is even}
Assume that $n$ is even. Let $\pi \in A_n$ be such that for all $x \in \Z_n$, $x^\pi \not\equiv x \pmod{2}$ (that is, for each $x \in \Z_n$, $x$ and $x^\pi$ have different parities). Then, $h([\pi]) = n/2$.
\end{prop}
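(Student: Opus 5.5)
By Lemma \ref{lem: computing h}(1), $h(\sigma)=(n-m)/2$, where $m$ is the number of odd-length cycles of $\sigma$, counting fixed points as cycles of length $1$. Together with the upper bound $h(n)\le n/2$ from Lemma \ref{lem: computing h}(3), it suffices to show that every element $\sigma$ of the coset $[\pi]=\langle c\rangle\pi$ has no cycles of odd length. We then get $h(\sigma)=n/2$ for every $\sigma\in[\pi]$, and hence $h([\pi])=\min_{\sigma\in[\pi]}h(\sigma)=n/2$.

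The key observation is that the parity-switching property is preserved under left multiplication by powers of $c$. Since $n$ is even, $c=(0,1,\ldots,n-1)^2$ is the map $x\mapsto x+2$ on $\Z_n$. Reduction mod $2$ is well defined on $\Z_n$ because $2\mid n$, so $c$ and all its powers preserve parity. With left-to-right composition, $x^{c^j\pi}=(x+2j)^\pi$. Because $x+2j\equiv x\pmod 2$ and $\pi$ switches parity, $(x+2j)^\pi\not\equiv x+2j\equiv x\pmod 2$. Thus every $\sigma=c^j\pi$ also satisfies $x^\sigma\not\equiv x\pmod 2$ for all $x\in\Z_n$.

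Next I would show that any permutation $\sigma$ with this property has only even-length cycles. Along a cycle $(x, x^\sigma, x^{\sigma^2},\ldots)$ the parity alternates at each step. Returning to $x$ after $k$ steps therefore forces $k$ to be even, so every cycle length is even. In particular $\sigma$ has no fixed points, so $m=0$ and $h(\sigma)=n/2$ by Lemma \ref{lem: computing h}(1).

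I expect no real obstacle. The only point needing care is confirming that parity makes sense on $\Z_n$ and that $c$ is translation by $2$; both use that $n$ is even. Nonemptiness of the set of such $\pi$ is not needed for this proposition; that is presumably handled in the subsequent theorem.
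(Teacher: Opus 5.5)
Your proposal is correct and follows essentially the same argument as the paper. Since $x^{c^j\pi}=(x+2j)^\pi$ switches parity, every cycle of $c^j\pi$ has even length, so $m=0$ and $h(c^j\pi)=n/2$ for every $j$; the appeal to the upper bound $h(n)\le n/2$ is not actually needed, because $h(c^j\pi)=n/2$ for all $j$ already determines the minimum.
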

\begin{proof}
Here, $c=(0 \, 1 \, \ldots \, n-1)^2$ and each element of $[\pi]$ has the form $c^j\pi$ for some $j \ge 0$. For each $x \in \Z_n$, we have $x^{c^j\pi} = (x+2j)^\pi$. Since $x$ and $x+2j$ have the same parity and $\pi$ switches parities, this means that $x$ and $x^{c^j\pi}$ have different parities. So, if $x^{(c^j\pi)^k} = x$, then $k$ must be even. Thus, each $x$ is in a cycle of $c^j \pi$ of even length. In other words, $c^j \pi$ contains no cycles of odd length. By Lemma \ref{lem: computing h}(1) with $m = 0$, $h(c^j \pi) = n/2$ for all $j$.
\end{proof}

\begin{thm}\label{thm: n is even}
Assume that $n$ is even and $n \ge 4$. Then, $h(n) = n/2$.
\end{thm}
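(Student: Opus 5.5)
Lemma \ref{lem: computing h}(3) gives $h(n) \le n/2$. So it suffices to find, for each even $n \ge 4$, one even permutation $\pi \in A_n$ that maps every $x \in \Z_n$ to an element of the opposite parity. Proposition \ref{prop: n is even} then gives $h([\pi]) = n/2$, hence $h(n) \ge n/2$, and equality follows.

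The construction splits according to $n \bmod 4$, since parity-switching permutations that are also even are what we need. A natural first candidate is a product of disjoint transpositions pairing each even number with an odd one, for instance
\begin{equation*}
\tau = (0 , 1)(2 , 3)\cdots(n-2 , n-1).
\end{equation*}
It has $n/2$ transpositions, so it lies in $A_n$ exactly when $n/2$ is even, i.e.\ $n \equiv 0 \pmod 4$. In that case we take $\pi = \tau$ and we are done.

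When $n \equiv 2 \pmod 4$, $\tau$ is odd, so we must modify it while keeping the parity-switching property. Every cycle of a parity-switching permutation alternates parities, so all its cycles have even length. Since $\pi$ must be even, we need an even number of such cycles. One option is to replace the two transpositions $(0,1)(2,3)$ by a single $4$-cycle $(0,1,2,3)$, which alternates parities. This gives
\begin{equation*}
\pi = (0 , 1 , 2 , 3)(4 , 5)\cdots(n-2 , n-1).
\end{equation*}
It has $1 + (n-4)/2 = (n-2)/2$ cycles, all of even length. For $n \equiv 2 \pmod 4$ this count is even, so $\pi$ is a product of an even number of even-length cycles and lies in $A_n$. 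Here $n \ge 4$ is needed for the $4$-cycle to exist; for $n=6$ the result is $(0,1,2,3)(4,5)$, which is fine. Alternatively, a single $n$-cycle $(0,1,\ldots,n-1)$ is odd when $n$ is even, so it cannot be used, which is why the mixed construction is needed.

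There is no real obstacle here. The only point requiring care is checking the sign of $\pi$ in each residue class mod $4$: a permutation whose cycles have lengths $\ell_i$ has sign $(-1)^{\sum(\ell_i-1)}$. With all $\ell_i$ even, $\pi$ is even exactly when the number of cycles is even. We then verify the parity-switching condition, which is immediate because consecutive entries in each cycle differ by $1$, or by $3$ for the wrap-around $3 \mapsto 0$.
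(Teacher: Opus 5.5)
Your proof is correct and follows the paper's route: the upper bound comes from Lemma~\ref{lem: computing h}(3), and the lower bound comes from Proposition~\ref{prop: n is even} applied to an explicit even, parity-switching permutation. The only difference is the choice of that permutation: the paper uses $(0,1)(2,3,\ldots,n-1)$, a product of two even-length cycles that works for every even $n \ge 4$, so your case split by $n \bmod 4$ is not needed, and your remark that a mixed construction is ``needed'' overstates things.
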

\begin{proof}
Let $\pi=(0 , 1)(2 , 3 , \ldots , n-1) \in A_n$. Then, $x^\pi \not\equiv x \pmod{2}$ for all $x \in \Z_n$. By Lemma \ref{lem: computing h} and Proposition \ref{prop: n is even}, $h(n) = h([\pi])=n/2$.
\end{proof}

\begin{prop}\label{prop: n=1 mod 4}
Assume that $n \equiv 1 \pmod{4}$. Let $\pi \in S_n$ be such that $x^\pi = -x $ for all $x \in \Z_n$. Then, $\pi$ is an even permutation and $h([\pi]) = (n-1)/2$.
\end{prop}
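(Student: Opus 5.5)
The proof has three parts: show that $\pi$ is even, compute the cycle type of each coset element $c^j\pi$, and then apply Lemma \ref{lem: computing h}(1) together with the upper bound in Lemma \ref{lem: computing h}(2).

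\textbf{Parity of $\pi$.} The map $x \mapsto -x$ on $\Z_n$ fixes only $0$, since $n$ is odd. Every other element lies in a $2$-cycle $(x,-x)$, so $\pi$ is a product of $(n-1)/2$ disjoint transpositions. Because $n \equiv 1 \pmod 4$, the number $(n-1)/2$ is even, and hence $\pi \in A_n$.

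\textbf{Structure of the coset.} Here $c=(0,1,\ldots,n-1)$. Since permutations compose left to right,
\[ x^{c^j\pi} = (x+j)^\pi = -x-j. \]
So each $\sigma_j := c^j\pi$ is the affine map $x\mapsto -x-j$, and $\sigma_j^2=\id$ because $-(-x-j)-j = x$. Thus $\sigma_j$ is an involution. Its fixed points are the solutions of $2x \equiv -j \pmod n$. Since $2$ is invertible modulo the odd number $n$, there is exactly one such solution. It follows that every $\sigma_j$ is a product of $(n-1)/2$ disjoint transpositions together with a single fixed point.

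\textbf{Applying Lemma \ref{lem: computing h}.} Each $\sigma_j$ has exactly one cycle of odd length, namely its fixed point, so $m=1$ in Lemma \ref{lem: computing h}(1). This gives $h(c^j\pi) = (n-1)/2$ for every $j$, and therefore $h([\pi]) = (n-1)/2$. The upper bound $h(n) \le (n-1)/2$ from Lemma \ref{lem: computing h}(2) confirms that this value is the maximum, but only the equality $h([\pi]) = (n-1)/2$ is needed here.

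The only step requiring any care is getting the composition convention right when computing $c^j\pi$. Even if the composition were taken in the other order, one would get $x\mapsto -(x)+j$ instead, which is still an involution with a unique fixed point, so the argument is robust to that choice.
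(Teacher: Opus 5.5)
Your proof is correct and is essentially the paper's argument: evenness comes from counting $(n-1)/2$ transpositions, each $c^j\pi$ is the involution $x\mapsto -x-j$ with a unique fixed point, and Lemma \ref{lem: computing h}(1) with $m=1$ finishes. The paper writes the fixed point explicitly as $(\tfrac{n-1}{2})j$, whereas you get uniqueness from the invertibility of $2$ modulo the odd number $n$; this is only a cosmetic difference.
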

\begin{proof}
First, when $\pi$ is written as a product of disjoint cycles, it contains $(n-1)/2$ transpositions. Since $n \equiv 1 \pmod{4}$, $\pi$ is even. 

Next, $c=(0 , 1 , \ldots , n-1)$ because $n$ is odd. We claim that for all $j \ge 0$, $c^j \pi$ has order 2 and that $(\tfrac{n-1}{2})j \in \Z_n$ is the unique fixed point of $c^j \pi$. To see this, let $x \in \Z_n$. Then,
\begin{equation*}
x^{c^j \pi} = (x+j)^\pi = -x-j.
\end{equation*}
Since $n>2$, $c^j \pi$ cannot be the identity function. Furthermore,
\begin{equation*}
x^{(c^j \pi)^2} = (-x-j)^{c^j\pi} = (-x)^\pi = x,
\end{equation*}
so $|c^j \pi| = 2$.  

For the fixed point, first note that $(\tfrac{n-1}{2})^\pi = \frac{n+1}{2}$. So, 
\begin{align*}
\big((\tfrac{n-1}{2})j\big)^{c^j \pi} = \big((\tfrac{n-1}{2})j+j\big)^{\pi} = \big((\tfrac{n+1}{2})j\big)^\pi = (\tfrac{n-1}{2})j.
\end{align*}
This fixed point is unique for $c^j \pi$, because if $y^{c^j \pi} = y$, then $-y-j=y$. Solving this equation for $y \in \Z_n$ yields $y = (-2)^{-1}j = -(\tfrac{n+1}{2})j = (\tfrac{n-1}{2})j$.

Finally, since $c^j \pi$ has order 2, when the permutation is written as a product of disjoint cycles it contains no cycle of odd length greater than 1. Since $c^j \pi$ has exactly one fixed point, $h(c^j \pi)=(n-1)/2$ by Lemma \ref{lem: computing h}. This holds for all $j \ge 0$, so $h([\pi])=(n-1)/2$.
\end{proof}

\begin{thm}\label{thm: n=1 mod 4}
Assume that $n \equiv 1 \pmod{4}$. Then, $h(n)=(n-1)/2$.
\end{thm}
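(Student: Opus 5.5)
The plan is to combine the upper bound from Lemma \ref{lem: computing h} with the explicit witness from Proposition \ref{prop: n=1 mod 4}. There are two inequalities to establish, and both are already essentially in hand.

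\textbf{Upper bound.} Since $n \equiv 1 \pmod{4}$, $n$ is odd. Lemma \ref{lem: computing h}(2) then gives $h(n) \le (n-1)/2$ immediately.

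\textbf{Lower bound.} Take the permutation $\pi$ of $\Z_n$ defined by $x^\pi = -x$.
\begin{itemize}
\item Proposition \ref{prop: n=1 mod 4} shows that this $\pi$ is even, so $\pi \in A_n$ is a legitimate candidate in the maximum defining $h(n)$.
\item The same proposition gives $h([\pi]) = (n-1)/2$.
\item Hence $h(n) = \max_{\sigma \in A_n} h([\sigma]) \ge h([\pi]) = (n-1)/2$.
\end{itemize}

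Together with the upper bound, this yields $h(n) = (n-1)/2$.

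\textbf{Where the difficulty lies.} The only substantive point is the verification, carried out in Proposition \ref{prop: n=1 mod 4}, that every rotation $c^j\pi$ is an involution with exactly one fixed point. Once that is known, Lemma \ref{lem: computing h}(1) applies with $m = 1$ to give $h(c^j\pi) = (n-1)/2$ for every $j$. Since this is already established, no real obstacle remains.

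One degenerate case needs a check. When $n = 1$ the statement would read $h(1) = 0$, but the paper assumes $n > 1$, so the smallest case is $n = 5$. There the proposition's hypothesis $n > 2$, used to rule out $c^j\pi$ being the identity, is satisfied.
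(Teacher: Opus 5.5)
Your proposal is correct and is the paper's argument: the upper bound comes from Lemma \ref{lem: computing h}(2), and the matching lower bound comes from the even permutation $x \mapsto -x$ of Proposition \ref{prop: n=1 mod 4}, whose rotations all satisfy $h(c^j\pi) = (n-1)/2$. The paper's one-line proof simply cites these two results, and your write-up spells out the same steps.
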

\begin{proof}
Apply Proposition \ref{prop: n=1 mod 4} and Lemma \ref{lem: computing h}.
\end{proof}

\begin{ex} We demonstrate the construction in Proposition \ref{prop: n=1 mod 4} when $n=5$.
    Let $\pi = (0)(1,4)(2,3)$. Then 
         \begin{align*}
       \langle c \rangle \pi= \{&\pi = (0)(1,4)(2,3), 
        c\pi = (0,4)(1,3)(2), 
        c^2\pi = (0,3)(1,2)(4),\\
        &c^3\pi = (0,2)(1)(3,4),
        c^4\pi = (0,1)(2,4)(3)
        \}.
    \end{align*}
    Every permutation in $\langle c \rangle \pi$ contains exactly one cycle of odd length, so $h([\pi]) = \frac{5-1}{2} =2$ by Lemma \ref{lem: computing h}. Since $h(5) \le 2$, we have $h(5) =2$.
\end{ex}

It remains to consider the case where $n \equiv 3 \pmod{4}$. For such $n$, we are able to prove that $h(n)$ is equal to either $(n-3)/2$ or $(n-1)/2$. Towards this end, we will demonstrate that $A_n$ always contains a permutation $\pi$ such that $h([\pi]) = (n-3)/2$.

\begin{prop}\label{prop: n=3 mod 4}
Assume that $n \equiv 3 \pmod{4}$ and $n \ge 7$. Define a permutation $\pi$ on $\Z_n$ by
\begin{equation*}
x^\pi = \begin{cases} x, & x=\tfrac{n-1}{2} \text{ or } \tfrac{n+1}{2}\\ -x, & x \ne \tfrac{n-1}{2} \text{ or } \tfrac{n+1}{2}
\end{cases}.
\end{equation*}
Then,
\begin{enumerate}[(1)]
\item $\pi \in A_n$ and $h(\pi) = (n-3)/2$;
\item for $1 \le j \le n-1$, $h(c^j \pi) = (n-1)/2$;
\item $h([\pi]) = (n-3)/2$.
\end{enumerate}
\end{prop}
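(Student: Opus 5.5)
The plan is to compare each element $c^j\pi$ of the coset with the involution $\sigma_j\colon x \mapsto -x-j$, which was analyzed in the proof of Proposition \ref{prop: n=1 mod 4}. Throughout, write $a = \tfrac{n-1}{2}$ and $b = \tfrac{n+1}{2} = a+1$, and note that $-a = b$ in $\Z_n$.

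For (1), the map $x \mapsto -x$ pairs each $x$ with $-x$ and fixes only $0$. The only change in $\pi$ is that the pair $\{a,b\}$ is no longer swapped. So $\pi$ has exactly three fixed points, $0$, $a$, $b$, and $(n-3)/2$ disjoint transpositions. Since $n \equiv 3 \pmod 4$, the number $(n-3)/2$ is even, so $\pi \in A_n$. Lemma \ref{lem: computing h}(1) with $m=3$ then gives $h(\pi) = (n-3)/2$.

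For (2), I would establish the factorization $c^j\pi = \sigma_j\,(a , b)$, with the composition read left to right. Indeed $x^{c^j\pi} = (x+j)^\pi$. This equals $-x-j = x^{\sigma_j}$ unless $x+j \in \{a,b\}$. For $x = a-j$ we get $x^{c^j\pi} = a$, while $x^{\sigma_j} = -a = b$; symmetrically, $b-j$ goes to $b$ under $c^j\pi$ and to $a$ under $\sigma_j$. So $c^j\pi$ is $\sigma_j$ followed by the transposition $(a , b)$.

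By the proof of Proposition \ref{prop: n=1 mod 4}, which uses only that $n$ is odd, $\sigma_j$ is an involution with a unique fixed point and otherwise consists of transpositions. Multiplying a permutation by a transposition $(a , b)$ merges the cycles containing $a$ and $b$ if they are different, and splits the cycle if they coincide. The points $a$ and $b$ lie in the same cycle of $\sigma_j$ exactly when $a^{\sigma_j} = b$, i.e. $-a-j = b$, i.e. $j = -(a+b) = -n = 0$. Hence for $1 \le j \le n-1$ the two cycles containing $a$ and $b$ are distinct and get merged. There are two cases:
\begin{itemize}
\item If one of $a, b$ is the fixed point of $\sigma_j$ (both cannot be, since the fixed point is unique), then a $1$-cycle merges with a $2$-cycle. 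The result is one $3$-cycle together with transpositions.
\item Otherwise two transpositions merge into a $4$-cycle, and the unique fixed point survives.
\end{itemize}
In either case $c^j\pi$ has exactly one cycle of odd length, so $m=1$. Since $c^j\pi$ is even, Lemma \ref{lem: computing h}(1) gives $h(c^j\pi) = (n-1)/2$.

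Part (3) is then immediate: $h([\pi])$ is the minimum of $h(c^j\pi)$ over $0 \le j \le n-1$, which by (1) and (2) equals $\min\{(n-3)/2,\,(n-1)/2\} = (n-3)/2$.

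The main obstacle is the bookkeeping in (2): checking carefully, with the left-to-right composition convention, that the discrepancy between $c^j\pi$ and $\sigma_j$ is exactly right multiplication by $(a , b)$. One must also confirm that merging never produces a second odd cycle. The cycle-merging observation handles this cleanly once the factorization is verified. The hypothesis $n \ge 7$ only ensures that everything is non-degenerate; the argument itself does not use it further.
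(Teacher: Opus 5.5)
Your proof is correct, and in part (2) it takes a different route from the paper.

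\textbf{The paper's route.} It splits into three cases, $j=1$, $j=n-1$, and $2\le j\le n-2$, and computes the cycle structure of $c^j\pi$ directly. In each case it locates the fixed point $(\tfrac{n-1}{2})j$, or shows there is none. It then exhibits an explicit cycle: the $3$-cycle $(\tfrac{n-3}{2},\tfrac{n-1}{2},\tfrac{n+1}{2})$ when $j=1$, its counterpart when $j=n-1$, and the $4$-cycle $(\tfrac{n-1}{2}-j,\tfrac{n-1}{2},\tfrac{n+1}{2}-j,\tfrac{n+1}{2})$ otherwise. Finally it checks that every remaining point satisfies $x^{(c^j\pi)^2}=x$.

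\textbf{Your route.} You factor $c^j\pi=\sigma_j\,(a,b)$ with $\sigma_j\colon x\mapsto -x-j$. You then reduce everything to the known involution $\sigma_j$ plus the standard merge/split behaviour of multiplying by a transposition. Your two cases (the fixed point of $\sigma_j$ lies in $\{a,b\}$ or not) match the paper's cases exactly. Indeed, $a$ is fixed by $\sigma_j$ exactly when $j=1$, and $b$ exactly when $j=n-1$.

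\textbf{What each buys.} Your argument is shorter and treats all $j$ uniformly. Even $j=0$ fits: it is the ``split'' case, which recovers the three fixed points in part (1). It also makes clear why this coset is just one transposition away from the $n\equiv 1 \pmod 4$ construction. The paper's computation instead produces the explicit cycles and shows which $j$ yield which cycle type, without appealing to the merging lemma.

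One detail is worth stating in your factorization. The only points $\sigma_j$ sends into $\{a,b\}$ are $a-j$ and $b-j$, so $(a,b)$ leaves $x^{\sigma_j}$ unchanged for every other $x$.
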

\begin{proof}
(1) Clearly, $\pi^2 = \id$, so $|\pi|=2$ because $n > 2$. The fixed points of $\pi$ in $\Z_n$ are $0$, $(n-1)/2$, and $(n+1)/2$. So, when written as a product of disjoint cycles, $\pi$ consists of $(n-3)/2$ transpositions and has three fixed points. By Lemma \ref{lem: computing h}, $h(\pi) = (n-3)/2$.

(2) We will consider three cases depending on $j$, and determine the cycle structure of $c^j \pi$ in each case.\\

\noindent\textbf{Case 1}: $j=1$.\\
As a function on $\Z_n$, we have
\begin{align*}
x^{c\pi} = (x+1)^\pi &= \begin{cases} x+1, & \text{if } x+1=\tfrac{n-1}{2} \text{ or } \tfrac{n+1}{2}\\ -x-1, & \text{if } x+1 \ne  \tfrac{n-1}{2} \text{ or } \tfrac{n+1}{2}
\end{cases}\\
&= \begin{cases}
x+1, & \text{if } x = \tfrac{n-3}{2} \text{ or } \tfrac{n-1}{2}\\
-x-1, & \text{if } x \ne \tfrac{n-3}{2} \text{ or } \tfrac{n-1}{2}
\end{cases}.
\end{align*}
If $c\pi$ has a fixed point $y$, then $y^{c\pi} = -y-1$. This implies that $y = \tfrac{n-1}{2}$, which in turn means $y^{c\pi} = y+1$. This is a contradiction, so $c\pi$ has no fixed point. Next, one may check that $c \pi$ contains the 3-cycle $\big( \tfrac{n-3}{2}, \, \tfrac{n-1}{2}, \, \tfrac{n+1}{2}\big)$. Moreover, when $x$ is not equal to any entry in this 3-cycle, we have
\begin{align*}
x^{(c\pi)^2} = (-x-1)^{c\pi}=(-x)^{\pi} = -(-x)=x.
\end{align*}
It follows that $c\pi$ consists of a single 3-cycle and $(n-3)/2$ transpositions. Thus, $h(c\pi) = (n-1)/2$.\\

\noindent\textbf{Case 2:} $j=n-1$.\\
This is similar to Case 1. Here, for $x \in \Z_n$,
\begin{align*}
x^{c^{-1}\pi} = (x-1)^\pi &= \begin{cases} x-1, & \text{if } x-1=\tfrac{n-1}{2} \text{ or } \tfrac{n+1}{2}\\ -x+1, & \text{if }  x-1 \ne  \tfrac{n-1}{2} \text{ or } \tfrac{n+1}{2}
\end{cases}\\
&= \begin{cases}
x-1, & \text{if } x = \tfrac{n+1}{2} \text{ or } \tfrac{n+3}{2}\\
-x+1, & \text{if } x \ne \tfrac{n+1}{2} \text{ or } \tfrac{n+3}{2}
\end{cases}.
\end{align*}
Arguing as in Case 1 shows that $c^{-1}\pi$ consists of the 3-cycle $\big(\tfrac{n+3}{2}, \, \tfrac{n+1}{2}, \, \tfrac{n-1}{2}\big)$, $(n-3)/2$ transpositions, and has no fixed points. Once again, we obtain $h(c^{-1} \pi) = (n-1)/2$.\\

\noindent\textbf{Case 3}: $2 \le j \le n-2$.\\
First, $(\tfrac{n-1}{2})j$ is the unique fixed point of $c^j \pi$. We have
\begin{equation*}
\big((\tfrac{n-1}{2})j\big)^{c^j\pi} = \big((\tfrac{n-1}{2})j+j\big)^{\pi} = \big((\tfrac{n+1}{2})j\big)^{\pi}.
\end{equation*}
Since $\tfrac{n+1}{2}$ is a unit of $\Z_n$ and $j \not\equiv \pm 1 \pmod{n}$, applying $\pi$ to $(\tfrac{n+1}{2})j$ produces $-(\tfrac{n+1}{2})j = (\tfrac{n-1}{2})j$. For uniqueness, suppose that $y \in \Z_n$ and $y=y^{c^j \pi} = (y+j)^\pi$. By the definition of $\pi$, this will equal either $y+j$ or $-y-j$. The former case implies that $j \equiv 0 \pmod{n}$, which is a contradiction. Hence, $y = -y-j$, and solving for $y$ yields $y = \big(\tfrac{n-1}{2}\big)j$.

Next, we note that $c^j \pi$ contains the following 4-cycle:
\begin{equation*}
\big( \tfrac{n-1}{2} -j, \; \tfrac{n-1}{2}, \; \tfrac{n+1}{2} -j, \; \tfrac{n+1}{2} \big).
\end{equation*}

To complete the description of the cycle structure of $c^j \pi$, let $x \in \Z_n$ such that $x$ does not occur in the above 4-cycle. Then, neither $x$ nor $x+j$ is a fixed point of $\pi$, so 
\begin{align*}
x^{c^j \pi} &= (x+j)^\pi = -x-j, \text{ and}\\
(-x-j)^{c^j \pi} &= (-x)^\pi = -(-x)=x.
\end{align*}
Thus, $x^{(c^j \pi)^2} = x$, and we conclude that either $x$ is the unique fixed point of $c^j \pi$, or $x$ occurs in a transposition of $c^j \pi$. Consequently, $c^j \pi$ contains one fixed point, one 4-cycle, and $(n-5)/2$ transpositions. Hence, $h(c^j \pi) = (n-1)/2$.

(3) This follows from (1) and (2).
\end{proof}

\begin{thm}\label{thm: n=3 mod 4 bounds}
Assume that $n \equiv 3 \pmod{4}$. Then, $h(n) = (n-3)/2$ or $h(n)=(n-1)/2$.
\end{thm}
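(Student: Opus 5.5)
The plan is to sandwich $h(n)$ between the bounds already available. For the upper bound, $n$ is odd, so Lemma \ref{lem: computing h}(2) gives $h(n) \le (n-1)/2$ directly.

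For the lower bound, I will use Proposition \ref{prop: n=3 mod 4}. When $n \ge 7$, it produces an explicit $\pi \in A_n$ with $h([\pi]) = (n-3)/2$. Since $h(n)$ is the maximum of $h([\sigma])$ over all $\sigma \in A_n$, this gives $h(n) \ge (n-3)/2$. The only remaining case with $n \equiv 3 \pmod 4$ is $n = 3$, where the needed bound $h(3) \ge 0$ is trivial. Indeed $A_3 = \langle c \rangle$ with $c = (0,1,2)$, so every coset $[\pi]$ contains $\id$ and $h(3) = 0 = (3-3)/2$.

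Combining the two bounds gives $(n-3)/2 \le h(n) \le (n-1)/2$. The last step is to note that $h(n)$ is an integer: it is a minimum and maximum of values $h(\sigma) = (n-m)/2$, each of which counts $3$-cycles. Since $(n-3)/2$ and $(n-1)/2$ are consecutive integers, $h(n)$ must equal one of them.

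There is no real obstacle here, since all the substantive work is in Proposition \ref{prop: n=3 mod 4}. The only point needing care is the small case $n = 3$, which falls outside that proposition's hypothesis $n \ge 7$ and so must be handled separately.
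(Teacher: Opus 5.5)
Your proposal is correct and follows the paper's proof: the upper bound from Lemma~\ref{lem: computing h}(2), the lower bound from Proposition~\ref{prop: n=3 mod 4} for $n \ge 7$, and $n=3$ handled separately using $A_3 = \langle c \rangle$. Your explicit integrality remark, which the paper leaves implicit, is the only addition.
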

\begin{proof}
When $n=3$, we have $[\pi]=A_3$ for all $\pi \in A_3$, so $h(3) = h(\id) = 0$. So, assume that $n \ge 7$. By Proposition \ref{prop: n=3 mod 4} and Lemma \ref{lem: computing h}, $(n-3)/2 \le h(n) \le (n-1)/2$.
\end{proof}

In the remaining sections of the paper, we will examine the case where $n \equiv 3 \pmod{4}$ in more detail. There exist examples of such $n$ for which $h(n)=(n-3)/2$. This is (trivially) true for $n=3$, and it also holds for $n=7$ and $n=11$; see Section \ref{sec: specific n=3 mod 4}. Beyond these small values of $n$, we have not found any examples for which $h(n)=(n-3)/2$. This prompts the following conjecture.

\begin{conj}\label{conj: n=3 mod 4}
Let $n \ge 13$ be odd. Then, $h(n) = (n-1)/2$.
\end{conj}

As shown by Theorem \ref{thm: n=1 mod 4}, the conjecture is true whenever $n \equiv 1 \pmod{4}$. In Section \ref{sec: general n=3 mod 4}, we will prove Theorem \ref{thm: exponent sum}, which confirms the conjecture for infinitely many $n$ such that $n \equiv 3 \pmod{4}$.

\section{General results for $n \equiv 3 \pmod{4}$}\label{sec: general n=3 mod 4}

A permutation $\pi$ on $\Z_n$ is said to be an \emph{orthomorphism} if $\pi-\id$ is also a permutation, and $\pi$ is a \emph{strong complete mapping} if both $\pi-\id$ and $\pi+\id$ are permutations. Extensive research has been done orthomorphisms and their relation to strong complete mappings on $\Z_n$ (the book \cite{Evans} by Evans is the standard reference), and they have proved to be a useful tool for determining circular sorting numbers by transpositions for $S_n$ \cite{bastide2025circularsortingstrongcomplete}.

By Theorem \ref{thm: n=3 mod 4 bounds}, when $n \equiv 3 \pmod{4}$ either $h(n) = (n-3)/2$ or $h(n)=(n-1)/2$. In this section, we show that in order to determine the value of $h(n)$ when $n \equiv 3 \pmod{4}$, it suffices to consider orthomorphisms on $\Z_n$. Using these permutations, we prove that there are infinitely many $n \equiv 3 \pmod{4}$ for which $h(n)=(n-1)/2$. This is accomplished by using linear orthomorphisms on $\Z_n$, which are those of the form $x \mapsto xa$ for some $a \in \Z_n$. Later, in Section \ref{sec: specific n=3 mod 4}, we will follow the work done in \cite{BorsWang2022} and construct piecewise linear orthomorphisms that can be used to determine $h(n)$ for some additional values of $n \equiv 3 \pmod{4}$.

\begin{defn}\label{def: fixed point set}
For each $\pi \in A_n$, let $\fix(\pi)=\{x \in \Z_n : x^\pi = x\}$ be the set of fixed points of $\pi$ in $\Z_n$.
\end{defn}

Throughout this section, assume that $n \ge 3$ is odd. So, $c=(0 , 1  , \ldots  , n-1)$.  Recall that a \textit{derangement} in $S_n$ is an element without a fixed point, i.e., $\sigma \in S_n$ is a derangement if $i^\sigma \neq i$ for all $i$.

\begin{lem}\label{lem: sum of fixed point sets} \mbox{}
\begin{enumerate}[(1)]
\item For each $x \in \Z_n$ and $\pi \in A_n$, there exists a unique $0 \le j \le n-1$ such that $x \in \fix(c^j \pi)$.

\item For every $\pi \in A_n$, $\sum_{j=0}^{n-1} |\fix(c^j \pi)| = n$.
\item If $h(n) = (n-1)/2$, then there exists $\pi \in A_n$ such that every element of $[\pi]$ has a unique fixed point.

\item If $\pi \in A_n$ and $[\pi]$ contains a permutation with at least two fixed points, then $h([\pi]) \le (n-3)/2$.

\item If $\pi \in A_n$ and $[\pi]$ contains a derangement, then $h([\pi]) \le (n - 3)/2$.
\end{enumerate}
\end{lem}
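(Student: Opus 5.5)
Part (1) is the engine for the whole lemma. Since $n$ is odd, $c=(0,1,\ldots,n-1)$ acts by $x^{c^j}=x+j$. Hence $x^{c^j\pi}=(x+j)^\pi$, and $x\in\fix(c^j\pi)$ exactly when $(x+j)^\pi=x$. Because $\pi$ is a bijection, there is a unique $y\in\Z_n$ with $y^\pi=x$, namely $y=x^{\pi^{-1}}$. The condition then reads $x+j=y$ in $\Z_n$, so $j\equiv x^{\pi^{-1}}-x \pmod n$, and there is exactly one $0\le j\le n-1$ with this property. Part (2) follows by double counting the pairs $(x,j)$ with $x\in\fix(c^j\pi)$. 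Each of the $n$ elements $x$ lies in exactly one $\fix(c^j\pi)$, so $\sum_j|\fix(c^j\pi)|=n$.

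Parts (4) and (5) come from Lemma \ref{lem: computing h}(1), which gives $h(\sigma)=(n-m)/2$, where $m$ is the number of odd-length cycles of $\sigma$ (fixed points included).
\begin{enumerate}[(i)]
\item If some $\sigma\in[\pi]$ has at least two fixed points, then $m\ge 2$. Since $n$ is odd and the odd cycles partition an odd number of points once the even cycles are removed, $m$ is odd, so in fact $m\ge3$.
\item If $\sigma\in[\pi]$ is a derangement, it still has at least one odd cycle because $n$ is odd, and that cycle has length at least $3$. If $m=1$, then $n-1$ points lie in even cycles and one odd cycle of length $\ge3$ remains, so $n-m\le$ \dots. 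It is cleaner to argue as follows: $m\ge1$ and $m$ is odd, and I need $m\ge3$, which can fail. In that case I compute directly. If $\sigma$ has exactly one odd cycle, of length $\ell\ge3$, then the count $h(\sigma)=\sum\lceil (L-1)/2\rceil=(n-1)/2$ still holds, so the bound must come from elsewhere. Instead I use (2): the fixed-point counts over $[\pi]$ sum to $n$ across $n$ elements, so if one element has $0$ fixed points, another must have at least $2$, and (4) applies.
\end{enumerate}
In both cases, $h([\pi])\le h(\sigma)\le (n-3)/2$ for the relevant $\sigma$.

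For part (3), suppose $h(n)=(n-1)/2$ and choose $\pi$ with $h([\pi])=(n-1)/2$. By (4), no element of $[\pi]$ has two or more fixed points. By (5), no element is a derangement. Therefore each element of $[\pi]$ has exactly one fixed point. The only step needing care is the parity observation in (4) that $m$ is odd, which I get because $n$ minus the total length of the even cycles is odd, and this total length is a sum of odd lengths numbering $m$. The real insight is reducing (5) to (4) via the counting identity (2), rather than attacking derangements directly.
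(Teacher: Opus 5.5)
Your proposal is correct and matches the paper's proof: the same explicit $j = x^{\pi^{-1}}-x$ in (1), double counting in (2), and the reduction of (5) to (4) via the pigeonhole consequence of (2). The differences are cosmetic. You get $m\ge 3$ in (4) from the parity of $m$ rather than from $h(\sigma)\le (n-2)/2$ plus integrality, and you derive (3) from (4) and (5) (not circular, since neither uses (3)) rather than directly. You should also delete the abandoned detour at the start of (ii), whose aside that ``$n-1$ points lie in even cycles'' is wrong.
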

\begin{proof}
(1) Given $x \in \Z_n$, let $f_x$ be the number of integers $j$ such that $0 \le j \le n-1$ and $x \in \fix(c^j \pi)$. We will show that $f_x=1$.

Let $j=x^{\pi^{-1}}-x$. Then, $x^{c^j \pi} = (x+j)^{\pi}=x$. So, $f_x \ge 1$. However, if $j_1$ and $j_2$ are such that $x$ is fixed by both $c^{j_1} \pi$ and $c^{j_2} \pi$, then $(x+j_1)^\pi = (x+j_2)^\pi$, and hence $j_1=j_2$. Thus, $f_x=1$. 

(2) Define $f_x$ as in the proof of (1). Then, $\sum_{j=0}^{n-1} |\fix(c^j \pi)| = \sum_{x=0}^{n-1} f_x = n$.  (This also follows from applying the Cauchy-Frobenius Lemma -- often inaccurately referred to as ``Burnside's Lemma'' -- to the action of $\langle c \rangle^\pi$ on $\{0^\pi, \dots, (n-1)^\pi\}$; see \cite[Theorem 1.7A]{DixonMortimer1996}.)

(3) Assume that $h(n) = (n-1)/2$. Then, there exists $\pi \in A_n$ with $h([\pi])=(n-1)/2$. Since $h(\sigma) \le (n-1)/2$ for every $\sigma \in A_n$, we must have $h(c^j \pi) = (n-1)/2$ for all $0 \le j \le n-1$. So, each element of $[\pi]$ has at most one fixed point. If $|\fix(c^{j_1}\pi)|=0$ for some $j_1$, then by (2) there must exist some $j_2$ such that $|\fix(c^{j_2}\pi)|\ge 2$, which is a contradiction. 

(4) Assume that $[\pi]$ contains an element $c^j \pi$ with at least two fixed points. By Lemma \ref{lem: computing h}, a decomposition of $c^j \pi$ into disjoint cycles will contain at least two cycles of odd length. Hence, $h([\pi]) \le h(c^j \pi) \le (n-2)/2$. Since $n$ is odd, this means $h([\pi]) \le (n-3)/2$.

(5) Assume $c^j\pi \in [\pi]$ is a derangement.  By (2) and the Pigeonhole Principle, $[\pi]$ contains an element with more than one fixed point.  The result follows by (4).
\end{proof}

\begin{samepage}
\begin{lem}
 \label{lem:orthsaregood}\mbox{}
\begin{enumerate}[(1)]
\item Let $\pi \in A_n$. The following are equivalent.
\begin{enumerate}[(i)]
\item $\pi$ is an orthomorphism on $\Z_n$.
\item $\pi^{-1}$ is an orthomorphism on $\Z_n$.
\item Each element of $[\pi]$ has a unique fixed point in $\Z_n$.
\end{enumerate}

\item $h(n) = (n-1)/2$ if and only if there exists $\pi \in A_n$ such that each $c^j \pi \in [\pi]$ satisfies the following properties:
\begin{enumerate}[(i)]
\item $c^j \pi$ has a unique fixed point in $\Z_n$.
\item If $x \in \Z_n$ is not the fixed point of $c^j \pi$, then $x$ is contained in a cycle of even length.
\item When written as a product of disjoint cycles, $c^j \pi$ contains an even number of cycles of even length.
\end{enumerate}
\end{enumerate}
\end{lem}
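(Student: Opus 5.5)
For part (1), I will translate the fixed-point condition into arithmetic on $\Z_n$. By the computation in Lemma \ref{lem: sum of fixed point sets}(1), $x \in \fix(c^j\pi)$ exactly when $(x+j)^\pi = x$. Substituting $y = x+j$, this says $y^\pi = y - j$, that is, $y - y^\pi = j$.

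So the number of fixed points of $c^j\pi$ equals $|\{y : y - y^\pi = j\}|$. Hence each $c^j\pi$ has a unique fixed point if and only if the map $y \mapsto y - y^\pi$ is a bijection of $\Z_n$. Since negation is a bijection, this holds if and only if $\pi - \id$ is a permutation, i.e. $\pi$ is an orthomorphism. This gives (i)$\Leftrightarrow$(iii).

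For (i)$\Leftrightarrow$(ii), I use the standard substitution $z = y^\pi$. Then $z^{\pi^{-1}} - z = y - y^\pi$, so $\pi^{-1}-\id$ is $-(\pi-\id)\circ\pi^{-1}$ up to reindexing. It is therefore a permutation exactly when $\pi - \id$ is. Note that membership in $A_n$ plays no role here.

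For part (2), I combine Lemma \ref{lem: computing h}(1) with Lemma \ref{lem: sum of fixed point sets}(3).

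For the forward direction, suppose $h(n) = (n-1)/2$ and pick $\pi$ with $h([\pi]) = (n-1)/2$. Then $h(c^j\pi) = (n-1)/2$ for every $j$, since $(n-1)/2$ is the global upper bound. By Lemma \ref{lem: computing h}(1), $m = 1$, so each $c^j\pi$ has exactly one odd-length cycle. By Lemma \ref{lem: sum of fixed point sets}(3) that cycle is a fixed point; more directly, if the odd cycle had length $\ge 3$ there would be no fixed points, contradicting Lemma \ref{lem: sum of fixed point sets}(2) together with the uniqueness argument there. This gives (i) and (ii). Property (iii) holds because $c^j\pi \in A_n$ and each even-length cycle is an odd permutation, so there must be an even number of them.

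For the converse, properties (i) and (ii) say each $c^j\pi$ has exactly one cycle of odd length, namely the fixed point. Then $m = 1$ and $h(c^j\pi) = (n-1)/2$ for all $j$, so $h([\pi]) = (n-1)/2$ and $h(n) \ge (n-1)/2$. Lemma \ref{lem: computing h}(2) gives the reverse inequality.

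One subtlety is that the statement of (2) quantifies over $\pi \in A_n$, so $c^j\pi$ is automatically even. Condition (iii) is then redundant but harmless: it is included to record the parity constraint used when constructing witnesses.

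The main obstacle, mild as it is, is the clean reindexing in part (1). One must check carefully that "fixed point of $c^j\pi$" corresponds to $y - y^\pi = j$ and not $y^\pi - y = j$, given the left-to-right composition convention. Either sign works for the bijectivity conclusion, so the argument is robust.
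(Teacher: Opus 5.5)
Your proof is correct and essentially the paper's: the paper identifies $\fix(c^j\pi)$ with the fiber of $\pi^{-1}-\id$ over $j$ (proving (ii)$\Leftrightarrow$(iii)), while you use the fiber of $y\mapsto y-y^\pi$ via the shift $y=x+j=x^{\pi^{-1}}$, which is the same correspondence; your (i)$\Leftrightarrow$(ii) step and your part (2) match the paper's argument. Your remark that condition (iii) of part (2) is automatic for $\pi\in A_n$ is also correct, since $c$ is even when $n$ is odd.
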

\end{samepage}
\begin{proof}
(1) (i) $\Leftrightarrow$ (ii) Clearly, $\pi$ is a bijection if and only if the same is true for $\pi^{-1}$. For the orthomorphism condition, define $f:=\pi^{-1}-\id$ and $g:=\pi-\id$, and let $\iota$ be the map sending $x$ to $-x$. Then, 
\begin{equation*}
x^{\pi f \iota} = (x-x^{\pi})^\iota = x^g,
\end{equation*}
so $\pi f \iota=g$. Thus, $f$ is a bijection if and only if $g$ is a bijection.

$(iii) \Rightarrow (ii)$ Assume that each element of $[\pi]$ has a unique fixed point. Define $\delta: \Z_n \to \Z_n$ by $\delta(x) = x^{\pi^{-1}}-x$ (we write $\delta(x)$ instead of $x^\delta$ for the sake of readability in the equation below), which is well-defined because $\pi$ is a bijection. For each $x \in \Z_n$, $x$ is fixed by $c^{\delta(x)} \pi$, because
\begin{equation*}
x^{c^{\delta(x)}\pi} = (x+\delta(x))^{\pi} = (x^{\pi^{-1}})^\pi=x.
\end{equation*}
So, if $\delta(x_1)=\delta(x_2)$, then $x_1$ is fixed by both $c^{\delta(x_1)}\pi$ and $c^{\delta(x_2)}\pi$. Since $x_2$ is also fixed by $c^{\delta(x_2)}\pi$ and these fixed points are unique by assumption, we have $x_1=x_2$. Thus, $\delta=\pi^{-1} - \id$ is injective, and $\pi^{-1}$ is an orthomorphism.

$(ii) \Rightarrow (iii)$ Assume that $\pi^{-1}$ is an orthomorphism, let $\delta = \pi^{-1} - \id$, and let $0 \le j \le n-1$. Since $\delta$ is a bijection, there is a unique $x \in \Z_n$ such that $x^\delta=j$. For this $x$, we have $x+j=x^{\pi^{-1}}$, so $x^{c^j \pi} = (x+j)^{\pi} = x$. This shows that each element of $[\pi]$ has a fixed point. By Lemma \ref{lem: sum of fixed point sets}(2), $|\fix(c^j \pi)|=1$ for every $0 \le j \le n-1$, so each fixed point is unique for $c^j \pi$.

(2) Assume $h(n)=(n-1)/2$. Then, there exists $\pi \in A_n$ such that $h(c^j \pi)=(n-1)/2$ for all $0 \le j \le n-1$. So, by Lemma \ref{lem: computing h}, the cycle structure of each $c^j \pi$ consists of a single cycle of odd length, and all other cycles have even length. Moreover, by Lemma \ref{lem: sum of fixed point sets}(3), each $c^j \pi$ has a unique fixed point. Thus, each element of $\Z_n$ not fixed by $c^j \pi$ lies in a cycle of even length, and there must be an even number of such cycles because $c^j \pi \in A_n$.

Conversely, if such a permutation $\pi$ exists, then conditions (i)--(iii) imply that $h(c^j \pi) = (n-1)/2$ for all $0 \le j \le n-1$. Hence, $h([\pi])=(n-1)/2$, and $h(n)=(n-1)/2$ by Lemma \ref{lem: computing h}.
\end{proof}

\begin{rem}\label{rem: h(pi) and h(pi^-1)}
As mentioned in the proof of Lemma \ref{lem:orthsaregood}, for any $\pi \in A_n$ there exists an element of $[\pi]$ with a given cycle structure if and only if $[\pi^{-1}]$ contains such an element.  Since the calculation of $h$ depends only on cycle structure, we have $h([\pi]) = h([\pi^{-1}])$ for all $\pi \in A_n$.
\end{rem}

From Lemma \ref{lem:orthsaregood}, we see that in order for $h(n)$ to equal $(n-1)/2$, it is necessary that there exist $\pi \in A_n$ such that each element of $[\pi]$ has a unique fixed point, and this condition is equivalent to $\pi^{-1}$ being an orthomorphism on $\Z_n$. Thus, it will be advantageous to identify orthomorphisms in $A_n$. For the remainder of this section, we will focus on linear functions on $\Z_n$ that are orthomorphisms; linear functions are particularly advantageous since it is possible to determine the interaction of such a permutation with $c$, which can itself be defined algebraically as the map $x \mapsto x + 1$ on $\Z_n$.

\begin{lem}
 \label{lem:goodlinear}
 Let $\pi: \Z_n \to \Z_n$ be a linear function of the form $x^\pi = xa$ for some $a \in \Z_n$.  
\begin{enumerate}[(1)]
\item $\pi^{-1}$ is an orthomorphism if and only if $\gcd(a, n) = 1$ and $\gcd(a - 1, n) = 1$.

\item Assume $\gcd(a,n) = \gcd(a-1, n) = 1$.  For each $0 \le j \le n-1$, $c^j \pi$ has the same cycle structure as $\pi$.
\end{enumerate}
\end{lem}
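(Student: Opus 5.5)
For part (1), I will compute $\pi^{-1}-\id$ directly and reduce both bijectivity conditions to arithmetic in $\Z_n$. First, $\pi$ is a permutation of $\Z_n$ if and only if multiplication by $a$ is injective on $\Z_n$, that is, if and only if $\gcd(a,n)=1$. Since both conditions in (1) include $\gcd(a,n)=1$, I may assume $a$ is a unit, so that $\pi^{-1}$ is the map $x\mapsto xa^{-1}$. Then $x^{\pi^{-1}}-x = x(a^{-1}-1) = x a^{-1}(1-a)$. Because $a^{-1}$ is a unit, this map is a bijection exactly when multiplication by $1-a$ is a bijection, which happens exactly when $\gcd(a-1,n)=1$. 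For the reverse direction I will note that if $\gcd(a,n)\ne 1$, then $\pi$ is not even a permutation, so $\pi^{-1}$ cannot be an orthomorphism. (Alternatively, I could use Lemma~\ref{lem:orthsaregood}(1) to pass from $\pi^{-1}$ to $\pi$, where $\pi-\id$ is multiplication by $a-1$.)

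For part (2), I will show that each $c^j\pi$ is conjugate to $\pi$ by a translation. As a function, $x^{c^j\pi}=(x+j)a = xa + ja$. The plan is to find $t\in\Z_n$ such that, with $\tau_t : x\mapsto x+t$, we have $\tau_t^{-1}\, c^j\pi\, \tau_t = \pi$ (composing left to right). This amounts to finding a fixed point $t$ of the affine map $x\mapsto xa+ja$, since then $c^j\pi$ is just multiplication by $a$ "centered" at $t$. The fixed-point equation is $t = ta + ja$, i.e.\ $t(1-a)=ja$, which has the unique solution $t = ja(1-a)^{-1}$ because $1-a$ is a unit. With this $t$, I will check that $(x+t)^{c^j\pi} - t = xa + ta + ja - t = xa$. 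So $c^j\pi = \tau_t\,\pi\,\tau_t^{-1}$, and conjugate permutations have the same cycle structure.

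The only delicate point is keeping the left-to-right composition conventions straight in the conjugation identity. Since cycle type is invariant under conjugation in either order, I could sidestep this entirely by exhibiting the bijection $x\mapsto x+t$ between the orbits of $\pi$ and those of $c^j\pi$. Beyond that, everything is routine unit arithmetic in $\Z_n$.
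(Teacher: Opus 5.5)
Your proposal is correct and follows the paper's proof: part (1) is the same unit computation for $\pi^{-1}-\id$, and in part (2) the paper likewise conjugates $c^j\pi$ to $\pi$ by a translation, namely $c^{abj}$ with $b=(a-1)^{-1}$, obtained from the relation $c^{-1}\pi=\pi c^{-a}$; this is your translation up to sign. As you suspected, your verification $(x+t)^{c^j\pi}-t=x^\pi$ actually gives $\tau_t\,c^j\pi\,\tau_t^{-1}=\pi$ in the left-to-right convention (so $c^j\pi=\tau_t^{-1}\pi\tau_t$), but this harmless slip does not affect the conjugacy conclusion.
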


\begin{proof}
(1) First, note that in order for $\pi$ (and hence $\pi^{-1}$) to be a bijection, it is both necessary and sufficient that $\gcd(a, n) = 1$. Indeed, if $\gcd(a,n) > 1$, then $\pi$ maps both $0$ and $n/\gcd(a,n)$ to $0$; and if $\gcd(a,n) = 1$, then $a \in \Z_n^\times$, and hence has inverse $x \mapsto xa^{-1}$.

Next, when $\pi$ is a bijection, $\delta:=\pi^{-1} - \id$ is well-defined. For $x \in \Z_n$, $x^\delta = xa^{-1} - x = x(a^{-1} - 1)$, and this is a bijection if and only if $a^{-1} - 1$ is a unit, which is true if and only if $-a(a^{-1} - 1) = a - 1$ is a unit, and this is equivalent to the condition $\gcd(a-1, n) = 1$.

(2) Note that for all $x \in \Z_n$,
\begin{equation*}
x^{c^{-1}\pi} = (x - 1)^\pi = a(x - 1) = ax - a = x^{\pi c^{-a}},
\end{equation*}
so $c^{-1}\pi = \pi c^{-a}$ and hence $c^{-k}\pi = \pi c^{-ka}$ for all $k \in \Z$.  Now, since $\gcd(a-1, n) = 1$, there exists $b \in \Z_n$ such that $c^{(a-1)b} = c$. Thus, for $0 \le j \le n-1$,
\begin{equation*}
c^j \pi = c^{(a-1)bj}\pi = c^{abj}c^{-bj}\pi = c^{abj} \pi c^{-abj}.
\end{equation*}
 Since $c^j \pi$ and $\pi$ are conjugate permutations, they have the same cycle structure.
\end{proof}

\begin{prop}
 \label{prop:goodpq}
 Let $n=pq$, where $p$ and $q$ are distinct odd primes. Let $a$ be an integer such that $2 \le a < pq$, $a$ has multiplicative order $p-1$ modulo $p$, and $a$ has multiplicative order $q-1$ modulo $q$, i.e., $a$ generates both $\Z_p^\times$ and $\Z_q^\times$. Define $\pi: \Z_n \to \Z_n$ by $x^\pi = xa$.
\begin{enumerate}[(1)]
\item $\pi$ is a permutation with one fixed point; one cycle of length $p-1$; one cycle of length $q-1$; and $\gcd(p-1, q-1)$ cycles of length $\LCM(p-1, q-1)$.

\item $\pi^{-1}$ is an orthomorphism.
\end{enumerate}
\end{prop}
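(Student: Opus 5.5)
The plan is to use the Chinese Remainder Theorem: identify $\Z_n$ with $\Z_p \times \Z_q$ via $x \mapsto (x \bmod p, x \bmod q)$. Under this identification, $\pi$ becomes the coordinatewise map $(u,v) \mapsto (ua, va)$. Each factor is then a multiplication map on a field.

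For part (1), the first step is to analyze each factor separately. On $\Z_p$, multiplication by $a$ fixes $0$. Because $a$ generates $\Z_p^\times$, it acts on the $p-1$ nonzero elements as a single cycle of length $p-1$. The same holds on $\Z_q$, giving one fixed point and one cycle of length $q-1$.

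The second step is to split $\Z_p \times \Z_q$ into four orbit classes and read off cycle lengths. The orbit length of $(u,v)$ is the least $k \ge 1$ with $ua^k = u$ and $va^k = v$.
\begin{itemize}
\item The point $(0,0)$ is fixed.
\item The points $(u,0)$ with $u \ne 0$ form one cycle of length $p-1$.
\item The points $(0,v)$ with $v \ne 0$ form one cycle of length $q-1$.
\item For $u,v$ both nonzero, the orbit length is the least $k$ divisible by both $p-1$ and $q-1$, namely $\LCM(p-1,q-1)$. There are $(p-1)(q-1)$ such points, so they form $(p-1)(q-1)/\LCM(p-1,q-1) = \gcd(p-1,q-1)$ cycles.
\end{itemize}
In particular $\gcd(a,n)=1$, since $a \not\equiv 0$ modulo $p$ and modulo $q$, so $\pi$ is a permutation.

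For part (2), we apply Lemma \ref{lem:goodlinear}(1), which reduces the claim to $\gcd(a,n)=1$ and $\gcd(a-1,n)=1$. The first was shown above. For the second, note that $a$ has order $p-1 \ge 2$ modulo $p$, so $a \not\equiv 1 \pmod p$; likewise $a \not\equiv 1 \pmod q$. Hence $\gcd(a-1,pq)=1$.

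There is no real obstacle here. The only point needing care is the orbit-length computation for elements with both coordinates nonzero: it uses that the stabilizing exponents of a nonzero coordinate are exactly the multiples of the order of $a$ modulo that prime.
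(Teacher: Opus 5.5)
Your proposal is correct and follows essentially the same route as the paper. Your four CRT classes $(0,0)$, $(u,0)$, $(0,v)$ and $(u,v)$ with $u,v\neq 0$ are exactly the paper's $0$, nonzero multiples of $q$, nonzero multiples of $p$, and units of $\Z_n$; part (2) is handled identically via Lemma \ref{lem:goodlinear}(1). The only minor difference is that you read off the orbit length $\LCM(p-1,q-1)$ on units directly from the coordinatewise orders, whereas the paper additionally appeals to the exponent of $\Z_n^\times$.
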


\begin{proof}
(1) First, $0^\pi = 0$, so $0$ is a fixed point of $\pi$.  Next, consider the orbit of $q \in \Z_n$ under the action of $\pi$, which will be
 \[ q^{\langle \pi \rangle} = \{qa^m : m \in \Z_{>0}\}.\]
 Since $a$ is a generator of the multiplicative group modulo $p$, we have
 \[ \{a^m \pmod{p}: m \in \Z_{>0}\} = \{1, \dots, p-1\},\]
 which implies
 \[ q^{\langle \pi \rangle} = \{q, 2q, \dots, (p-1)q\}.\]
So, the multiples of $q$ form a $(p-1)$-cycle under the action of $\pi$.  By analogous reasoning, the multiples of $p$ form a $(q-1)$-cycle under the action of $\pi$.  

Finally, suppose $\gcd(x,n) = 1$ and $x = x^{\pi^m} = xa^m$ for some $m > 0$.  Then, $x(a^m - 1) = 0$, which implies $a^m = 1$ in $\Z_n$.  Consequently, $a^m \equiv 1 \pmod p$ and $a^m \equiv 1 \pmod q$, so $m$ must be a multiple of both $p-1$ and $q-1$.  This means that $m$ is a multiple of
 \[ \LCM(p-1, q-1) = \frac{(p-1)(q-1)}{\gcd(p-1,q-1)}. \] 
Since $\Z_n^\times \cong C_{p-1} \times C_{q-1}$, the exponent of $\Z_n^\times$ is $\LCM(p-1,q-1)$, and so $a$ must have multiplicative order exactly $\LCM(p-1,q-1)$.  Thus, the remaining
 \[ pq - (p-1) - (q-1) - 1 = (p-1)(q-1)\]
 elements of $\Z_n$ form $\gcd(p-1,q-1)$ distinct $\LCM(p-1,q-1)$-cycles under the action of $\pi$, and $\pi$ has the stated cycle structure.

(2) By assumption, $\gcd(a,n)=1$. If $\gcd(a - 1, n) > 1$, then either $p$ or $q$ divides $a - 1$. But then either $a \equiv 1 \pmod p$ or $a \equiv 1 \pmod q$, contradicting the fact that $a$ is a generator of the multiplicative groups modulo $p$ and modulo $q$. So, $\gcd(a - 1, n) =1$ and $\pi^{-1}$ is an orthomorphism by Lemma \ref{lem:goodlinear}(1).
\end{proof}

\begin{thm}
 \label{thm:pqfulldiam}
 Let $n = pq$, where $p$ and $q$ are distinct odd primes. Then, $h(n) = (n-1)/2$.
\end{thm}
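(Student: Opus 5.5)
The plan is to use the linear permutation $x^\pi = xa$ from Proposition \ref{prop:goodpq} as a witness and then verify conditions (i)--(iii) of Lemma \ref{lem:orthsaregood}(2). First, we need a suitable $a$. By the Chinese Remainder Theorem, choose a primitive root $g_p$ modulo $p$ and a primitive root $g_q$ modulo $q$. Then take $a \in \{2,\dots,pq-1\}$ with $a \equiv g_p \pmod p$ and $a \equiv g_q \pmod q$. Such an $a \ne 0,1$ exists because $g_p, g_q \not\equiv 0, 1$ (if $p=3$, then $g_p = 2$). So the hypotheses of Proposition \ref{prop:goodpq} hold.

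By Proposition \ref{prop:goodpq}(2), $\pi^{-1}$ is an orthomorphism, so $\gcd(a,n)=\gcd(a-1,n)=1$. Lemma \ref{lem:goodlinear}(2) then says that every $c^j\pi$ is conjugate to $\pi$. Hence it suffices to check the cycle structure of $\pi$ alone. By Proposition \ref{prop:goodpq}(1), that structure is:
\begin{itemize}
\item one fixed point;
\item one cycle of length $p-1$;
\item one cycle of length $q-1$;
\item $\gcd(p-1,q-1)$ cycles of length $\LCM(p-1,q-1)$.
\end{itemize}
Since $p$ and $q$ are odd, every length other than the fixed point is even. This gives conditions (i) and (ii).

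For (iii), we count the cycles of even length: there are $2 + \gcd(p-1,q-1)$ of them. Because $\gcd(p-1,q-1)$ is even, this number is even. Equivalently, $\pi$ is automatically even, so $\pi \in A_n$. Lemma \ref{lem:orthsaregood}(2) then gives $h(n) = (n-1)/2$. As a direct alternative, Lemma \ref{lem: computing h}(1) gives $h(c^j\pi) = (n-1)/2$ for all $j$, and we conclude using the upper bound in Lemma \ref{lem: computing h}(2).

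The only step with any content is confirming membership in $A_n$, that is, the parity count. It comes down to noting that $\gcd(p-1,q-1)$ is even because both $p-1$ and $q-1$ are even. Everything else is immediate from the earlier results.
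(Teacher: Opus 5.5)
Your proposal is correct and takes essentially the same approach as the paper. Both choose $a$ via the Chinese Remainder Theorem as a primitive root modulo both $p$ and $q$, use Proposition~\ref{prop:goodpq} and Lemma~\ref{lem:goodlinear}(2) to reduce to the cycle structure of $\pi$ itself, and note that $\gcd(p-1,q-1)$ and $\LCM(p-1,q-1)$ are even so that Lemma~\ref{lem:orthsaregood}(2) applies (your check that $a \ne 0,1$ and the direct route through Lemma~\ref{lem: computing h} are fine but not needed).
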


\begin{proof}
By the Chinese Remainder Theorem there exists $a \in \Z_n$ such that $a$ is a generator of the multiplicative groups of both $\Z_p$ and $\Z_q$. By Proposition \ref{prop:goodpq}, $\pi: \Z_n \to \Z_n$ defined by $x^\pi = xa$ is a permutation of $\Z_n$ such that $\pi^{-1}$ is an orthomorphism and $\pi$ consists of cycles of length $1$, $p-1$, $q-1$, and $\gcd(p-1,q-1)$ cycles of length $\LCM(p-1,q-1)$.  Since both $\gcd(p-1,q-1)$ and $\LCM(p-1,q-1)$ are even, $\pi$ is a permutation with a unique fixed point; all remaining elements of $\Z_n$ are contained in cycles of even length; and there are an even number of cycles of even length. Moreover, by Lemma \ref{lem:goodlinear}(2), all elements of $[\pi]$ have this cycle structure.  Thus, $h(n)=(n-1)/2$ by Lemma \ref{lem:orthsaregood}(2).
\end{proof}

We get the following immediate corollary.

\begin{cor}
 \label{cor:pq1and3mod4}
 Let $n = pq$, where $p \equiv 1 \pmod 4$ and $q \equiv 3 \pmod 4$ are primes.  Then, $h(n) = (n-1)/2$.  In particular, there are infinitely many integers $n \equiv 3 \pmod 4$ such that $h(n) = (n-1)/2$.
\end{cor}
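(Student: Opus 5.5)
The first claim is a direct specialization of Theorem \ref{thm:pqfulldiam}. Let $p \equiv 1 \pmod 4$ and $q \equiv 3 \pmod 4$ be primes. Then $p \neq q$ and both are odd, so $n = pq$ satisfies the hypotheses of Theorem \ref{thm:pqfulldiam}, which gives $h(n) = (n-1)/2$ immediately.

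For the residue class, I will compute $n = pq \equiv 1 \cdot 3 = 3 \pmod 4$. So every such product lies in the class $n \equiv 3 \pmod 4$ that is left open by Theorem \ref{thm: n=3 mod 4 bounds}. For each such $n$, the conclusion $h(n) = (n-1)/2$ selects the larger of the two possible values.

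To get infinitely many such $n$, I will fix one prime $p \equiv 1 \pmod 4$, say $p = 5$. There are infinitely many primes $q \equiv 3 \pmod 4$, by the elementary Euclid-style argument: if $q_1, \ldots, q_k$ were all of them, then $4q_1\cdots q_k - 1$ is $\equiv 3 \pmod 4$, so it must have a prime factor $\equiv 3 \pmod 4$, and that factor is none of the $q_i$. Dirichlet's theorem would also suffice. The products $5q$ are then pairwise distinct, which gives infinitely many $n \equiv 3 \pmod 4$ with $h(n) = (n-1)/2$.

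I do not expect any real obstacle. The only points to check are the congruence computation and the citation of infinitely many primes $\equiv 3 \pmod 4$.
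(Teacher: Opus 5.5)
Your proposal is correct and follows the same route as the paper, which states this as an immediate corollary of Theorem~\ref{thm:pqfulldiam}: specialize to $p \equiv 1$, $q \equiv 3 \pmod 4$, note $pq \equiv 3 \pmod 4$, and use the infinitude of primes $q \equiv 3 \pmod 4$. Your Euclid-style argument correctly fills in that last step, which the paper leaves implicit.
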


There is a more general version of Theorem \ref{thm:pqfulldiam} that can be applied to some values of $n$ that are divisible by more than two odd primes.

\begin{lem}
 \label{lem:ordersandorbits}
Let $n = p_1^{c_1} \cdots p_k^{c_k}$, where $p_1, \dots, p_k$ are distinct odd primes and each $c_i \ge 1$. Let $d$ be a divisor of $n$, and write $d = p_1^{e_1} \cdots p_k^{e_k}$, where $0 \le e_i \le c_i$ for each $i$. Let $I_d \colonequals \{i : e_i < c_i\}$, and let $X_d \colonequals \{x \in \Z_n : \gcd(x,n) = d\}$. 
\begin{enumerate}[(1)]
\item $|X_d| = \prod_{i \in I_d} p_i^{c_i - e_i - 1}(p_i - 1)$.

\item Let $a \in \Z_n^\times$, for each $1 \le i \le k$ let $m_i$ be the multiplicative order of $a$ modulo $p_i^{c_i-e_i}$, and let $m_d \colonequals \LCM(\{m_i : i \in I_d\})$. Define $\pi: \Z_n \to \Z_n$ by $x^\pi = xa$. Then, $\langle \pi \rangle$ has $|X_d|/m_d$ 
orbits of length $m_d$ 
on $X_d$. In particular, if each $m_i$ is even and $|I_d| > 1$, then $\langle \pi \rangle$ has an even number of orbits of even length on $X_d$.
\end{enumerate}
\end{lem}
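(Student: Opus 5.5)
The plan is to identify $X_d$ with the unit group of a smaller ring, and then compute orbit lengths of multiplication by $a$ through the Chinese Remainder Theorem.

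\textbf{Step 1 (describe $X_d$).} Every $x \in X_d$ can be written uniquely as $x = dy$ with $y \in \Z_{n/d}$, and $\gcd(x,n)=d$ holds exactly when $\gcd(y,n/d)=1$. Hence $y \mapsto dy$ is a bijection from $\Z_{n/d}^\times$ onto $X_d$. Write $n/d = \prod_{i \in I_d} p_i^{c_i-e_i}$; the primes with $e_i = c_i$ drop out of this product. By the Chinese Remainder Theorem,
\[
\Z_{n/d}^\times \cong \prod_{i\in I_d} \Z_{p_i^{c_i-e_i}}^\times .
\]
Each factor has order $p_i^{c_i-e_i-1}(p_i-1)$, which gives (1).

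\textbf{Step 2 (orbit lengths).} The map $x \mapsto xa$ sends $X_d$ to itself, and under the bijection of Step 1 it corresponds to $y \mapsto ya$ on $\Z_{n/d}^\times$. We have $dya^m = dy$ in $\Z_n$ if and only if $y(a^m-1) \equiv 0 \pmod{n/d}$. Since $y$ is a unit, this is equivalent to $a^m \equiv 1 \pmod{n/d}$, that is, $a^m \equiv 1 \pmod{p_i^{c_i-e_i}}$ for every $i \in I_d$. That holds exactly when $m_i \mid m$ for all $i \in I_d$, i.e. when $m_d \mid m$. So every orbit on $X_d$ has length exactly $m_d$, and the number of orbits is $|X_d|/m_d$. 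If $I_d$ is empty, then $d=n$, $X_d=\{0\}$ and $m_d=1$ by convention; this case is harmless.

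\textbf{Step 3 (parity claim).} Suppose each $m_i$ is even and $|I_d|>1$. Then $m_d$ is even, so the orbits have even length. To count them, write the cyclic group $\langle a\rangle \le \Z_{n/d}^\times$ of order $m_d$; its cosets are exactly the orbits, so the number of orbits is the index $|\Z_{n/d}^\times|/m_d$. Let $v_2$ denote the $2$-adic valuation. Then
\[
v_2\bigl(|\Z_{n/d}^\times|\bigr)=\sum_{i\in I_d} v_2(p_i-1), \qquad v_2(m_d)=\max_{i\in I_d} v_2(m_i)\le \max_{i\in I_d} v_2(p_i-1).
\]
The inequality holds because $m_i$ divides $|\Z_{p_i^{c_i-e_i}}^\times|$, whose $2$-part is that of $p_i-1$. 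Since $|I_d|\ge 2$ and each $v_2(p_i-1)\ge 1$, the sum exceeds the maximum by at least $1$. Therefore the index is even.

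The only step needing real care is the $2$-adic comparison in Step 3: we must bound $m_i$'s $2$-part by that of $p_i-1$ rather than by the full group order, and the odd prime powers make this work. Otherwise the argument is routine CRT bookkeeping.
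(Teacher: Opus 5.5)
Your proof is correct and follows essentially the same route as the paper. You use the Chinese Remainder Theorem to count $X_d$, the equivalence $xa^t = x \iff a^t \equiv 1 \pmod{n/d}$ to show every orbit has length exactly $m_d$, and a $2$-adic valuation comparison for the parity claim. The only real difference is in that last step: you bound $v_2(m_d)$ by $\max_{i\in I_d} v_2(p_i-1)$, whereas the paper factors $|X_d|/m_d$ through $\prod_{i\in I_d} m_i$. As a result, your argument shows the number of orbits is even using only $|I_d|>1$ and that the $p_i$ are odd, so evenness of the $m_i$ is needed only to make the orbit length even.
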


\begin{proof}
(1) By the Chinese Remainder Theorem, every element $x \in \Z_n$ is determined uniquely by the $k$ congruences $x \equiv b_i \pmod {p_i^{c_i}}$, $1 \le i \le k$.  Let $x \in X_d$.  If $p_i^{c_i} \mid x$, then $b_i = 0$ and is unique; otherwise,  $\gcd(x, p_i^{c_i}) = p_i^{e_i} < p_i^{c_i}$, and there are $p_i^{c_i - e_i - 1}(p_i - 1)$ choices for $b_i$, since $x/p_i^{e_i}$ must be a unit modulo $p_i^{c_i-e_i}$.  Hence,
 \[ |X_d| = \prod_{i \in I_d} p_i^{c_i - e_i - 1}(p_i - 1).\]

(2) Let $x \in X_d$ and let $t \ge 1$ be the length of the orbit of $\langle \pi \rangle$ containing $x$. Then, $x^{\pi^t} = xa^t = x$, so $x(a^t - 1) = 0$ in $\Z_n$. This implies $a^t \equiv 1 \pmod {n/d}$. So, $m_i$ divides $t$ for each $i \in I_d$, which means $m_d$ divides $t$. Conversely, $a^{m_d} \equiv 1 \pmod {n/d}$, so $x(a^{m_d}-1) = 0$ in $\Z_n$, from which it follows that $x$ is in an orbit of length dividing $m_d$. Thus, every element of $X_d$ is in an orbit of length $m_d$, and there are $|X_d|/m_d$ 
orbits of $\langle \pi \rangle$ on $X_d$.

Finally, note that $\prod_{i \in I_d} m_i$ divides $|X_d|$ because the group of units modulo $p_i^{c_i-e_i}$ has order $p_i^{c_i-e_i-1}(p_i-1)$.  Since each $m_i$ is even, we can write $m_i = 2^{a_i}b_i$ with $a_i \ge 1$, and hence the exponent of the highest power of $2$ dividing $\prod_{i \in I_d} m_i$ is
\[ \sum_{i \in I_d} a_i \ge \max_{i \in I_d} a_i +(|I_d| - 1),\]
which is greater than the highest power of $2$ dividing $m_d$; hence $\left(\prod_{i \in I_d} m_i\right)/m_d$ is even.  Thus,
\begin{equation*}
\dfrac{|X_d|}{m_d} = \dfrac{|X_d|}{\prod_{i \in I_d} m_i} \cdot \dfrac{\prod_{i \in I_d} m_i}{m_d}
\end{equation*}
is even, as desired.
\end{proof}

\begin{thm}\label{thm: exponent sum}
Let $n = p_1^{c_1} \cdots p_k^{c_k}$, where $p_1, \dots, p_k$ are distinct odd primes and each $c_i \ge 1$. Assume that either of the following two conditions holds:
\begin{enumerate}[(i)]
\item $c_1 + \cdots + c_k$ is even.
\item $c_1 + \cdots + c_k$ is odd and there exists $1 \le \ell \le k$ such that $p_\ell \equiv 1 \pmod 4$ and $c_\ell$ is odd.
\end{enumerate}
Then, $h(n) = (n-1)/2$.
\end{thm}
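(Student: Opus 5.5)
The plan is to imitate the proof of Theorem \ref{thm:pqfulldiam}. I will use a linear map $x^\pi = xa$ with a suitably chosen $a \in \Z_n^\times$, and check the three conditions of Lemma \ref{lem:orthsaregood}(2) for $\pi$ alone. This suffices because, once $\gcd(a,n)=\gcd(a-1,n)=1$, Lemma \ref{lem:goodlinear}(2) says every $c^j\pi$ is conjugate to $\pi$. The cycle structure of $\pi$ is read off from Lemma \ref{lem:ordersandorbits}. I partition $\Z_n$ into the sets $X_d$ for $d \mid n$.
\begin{itemize}
\item For $d = n$ we get $X_n=\{0\}$, which is the unique fixed point.
\item For every other $d$ we have $I_d \neq \emptyset$. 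The orbits on $X_d$ all have length $m_d$, and $m_d$ is even as soon as every $m_i$ with $i \in I_d$ is even.
\end{itemize}
So conditions (i) and (ii) hold whenever all the relevant $m_i$ are even. The real work is the parity of the total number of even cycles, condition (iii).

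\textbf{Case (i).} Here I take $a$, via the Chinese Remainder Theorem, to be a primitive root modulo $p_i^{c_i}$ for every $i$. This is possible since these unit groups are cyclic for odd primes.
\begin{itemize}
\item \emph{Units and orthomorphism.} The reduction of $a$ is then a primitive root modulo every $p_i^{f}$ with $1 \le f \le c_i$. In particular $a \not\equiv 1 \pmod{p_i}$, because $p_i \ge 3$. Hence $a$ and $a-1$ are units, and $\pi^{-1}$ is an orthomorphism by Lemma \ref{lem:goodlinear}(1).
\item \emph{Orders.} We get $m_i = p_i^{c_i-e_i-1}(p_i-1)$, which is even.
\item \emph{Sets with $|I_d|>1$.} Lemma \ref{lem:ordersandorbits}(2) gives an even number of even cycles on $X_d$.
\item \emph{Sets with $I_d = \{i\}$.} Here $|X_d| = m_i$ by Lemma \ref{lem:ordersandorbits}(1), so $X_d$ is a single even cycle.
\end{itemize}
The divisors with $I_d=\{i\}$ are those with $e_j = c_j$ for $j\ne i$ and $0\le e_i\le c_i-1$, so there are exactly $c_i$ of them. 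The number of even cycles is therefore congruent to $c_1+\cdots+c_k$ modulo $2$. This is even in case (i), and Lemma \ref{lem:orthsaregood}(2) finishes this case.

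\textbf{Case (ii).} I modify the choice at $\ell$: modulo $p_\ell^{c_\ell}$ I take $a \equiv g^2$, where $g$ is a primitive root modulo $p_\ell^{c_\ell}$. The other components stay primitive roots.
\begin{itemize}
\item \emph{Orders at $\ell$.} The reduction of $g^2$ modulo $p_\ell^{f}$ is the square of a primitive root there, so it has order $p_\ell^{f-1}(p_\ell-1)/2$. This is even because $p_\ell\equiv 1 \pmod 4$.
\item \emph{Units and orthomorphism.} Since $p_\ell \ge 5$, the order of $a$ modulo $p_\ell$ is $(p_\ell-1)/2>1$, so still $a\not\equiv 1 \pmod{p_\ell}$. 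Thus $a-1$ remains a unit and the orthomorphism property survives.
\item \emph{Sets with $|I_d|>1$.} All $m_i$ are even, so Lemma \ref{lem:ordersandorbits}(2) applies unchanged.
\item \emph{Sets with $I_d=\{\ell\}$.} Now $|X_d|/m_\ell = 2$, so each such $X_d$ contributes two even cycles instead of one.
\end{itemize}
The total number of even cycles is then congruent to $\sum_{i\ne\ell} c_i = (c_1+\cdots+c_k) - c_\ell$ modulo $2$. This is odd minus odd, hence even.

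The main obstacle I expect is this parity bookkeeping. In particular one must check that the sets with $|I_d|=1$ are the only ones that can contribute an odd number of cycles, and that the squaring trick keeps every $m_i$ even. This is exactly where the hypothesis $p_\ell\equiv 1\pmod 4$ is needed.
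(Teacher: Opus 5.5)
Your proposal is correct and follows essentially the same route as the paper. It makes the same choice of $a$: maximal order modulo each $p_i^{c_i}$, with the square of a primitive root at $p_\ell$ in case (ii). It partitions $\Z_n$ into the sets $X_d$ via Lemma \ref{lem:ordersandorbits}, and it does the same parity count over the divisors with $|I_d|=1$, namely $c_1+\cdots+c_k$ versus $2c_\ell+\sum_{i\ne\ell}c_i$.
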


\begin{proof}
We will specify a unit $a \in \Z_n^\times$ and define a permutation $\pi: \Z_n \to \Z_n$ by $x^\pi = xa$. The choice of $a$ depends on whether condition (i) or condition (ii) is satisfied.
\begin{enumerate}[(i)]
\item If condition (i) holds, then choose $a \in \Z_n$ such that for each $1 \le i \le k$, the multiplicative order of $a$ modulo $p_i^{c_i}$ is $p_i^{c_i-1}(p_i-1)$. This is possible because of the Chinese Remainder Theorem, and the fact that $\Z_{p_i^{c_i}}^\times$ is a cyclic group of order $p_i^{c_i-1}(p_i-1)$.
\item If condition (ii) holds, then $\Z_{p_\ell^{c_\ell}}^\times$ is cyclic of order $p_\ell^{c_\ell-1}(p_\ell-1)$ and $p_\ell-1$ is divisible by 4. So, we can choose $a \in \Z_n$ such that the multiplicative order of $a$ modulo $p_i^{c_i}$ is $p_\ell^{c_\ell-1}(p_\ell-1)/2$ when $i = \ell$, and $p_i^{c_i-1}(p_i-1)$ when $i \ne \ell$.
\end{enumerate}
Note that under either condition, if $1 \le i \le k$ and $0 \le e_i < c_i$, then the multiplicative order of $a$ modulo $p_i^{c_i-e_i}$ is even.

Let us first verify that $\pi^{-1}$ is an orthomorphism. Certainly, $\gcd(a,n)=1$ because $a$ is a unit of $\Z_n$. If $\gcd(a-1, n) > 1$, then $p_i \mid a - 1$ for some $i$, so $a \equiv 1 \pmod {p_i}$.  However, by construction, the multiplicative order of $a$ modulo $p_i$ is greater than $1$ for each $i$.  Thus $\gcd(a-1, n) = 1$. By Lemma \ref{lem:goodlinear}, $\pi^{-1}$ is an orthomorphism, and, moreover, all elements in $[\pi]$ have the same cycle structure as $\pi$.

The remainder of the proof consists of a description of the cycle structure of $\pi$. For each positive divisor $d$ of $n$, define $I_d$ and $X_d$ as in Lemma \ref{lem:ordersandorbits}. Then, $\Z_n = \bigcup_{d \mid n} X_d$, and this union is disjoint. So, the cycle structure of $\pi$ can be determined by examining the orbits of $\langle \pi \rangle$ on each set $X_d$.

Clearly, $X_n = \{0\}$ and 0 is a fixed point of $\pi$. Since $a$ is a unit of $\Z_n$, this fixed point is unique. So, $\langle \pi \rangle$ contains a single orbit of length 1. This occurs under both condition (i) and condition (ii).

Next, suppose $d$ is such that $|I_d| > 1$. Write $d=p_1^{e_1} \cdots p_k^{e_k}$ for some $0 \le e_i \le c_i$. Define the integers $m_i$ and $m_d$ as in Lemma \ref{lem:ordersandorbits}. As noted previously, each $m_i$ is even. So, by Lemma \ref{lem:ordersandorbits}, $\langle \pi \rangle$ has an even number of orbits of even length on $X_d$. As in the previous paragraph, this is true under both condition (i) and condition (ii).

It remains to consider the case where $|I_d|=1$. Let
\[D := \{d \in \N: d|n \text{ and } |I_d|=1\}\] and $X := \bigcup_{d \in D} X_d$. We will show that $\langle \pi \rangle$ has an even number of orbits of even length on $X$. Our arguments differ depending on whether condition (i) or condition (ii) is satisfied.

Assume first that condition (i) holds. Given $d \in D$, there exist $1 \le j \le k$ and $0 \le e_j < c_j$ such that $d=n/p_j^{c_j-e_j}$. Then,
\begin{equation*}
m_d=m_j=p_j^{c_j-e_j-1}(p_j-1) = |X_d|.
\end{equation*}
So, $\langle \pi \rangle$ has a single orbit of length $m_d$ on $X_d$. It remains in this case to determine $|D|$.  Now, once $j$ has been fixed, there are $c_j$ choices for $e_j$. Hence, $|D|=c_1 + \cdots + c_k$, which is even because condition (i) holds. Thus, across all of $X$, $\langle \pi \rangle$ has $|D|$ orbits, each of even length.

Finally, assume that condition (ii) holds. Without loss of generality, assume that $\ell=1$. If $d=n/p_{1}^{c_1-e_1}$ for some $0 \le e_1 < c_1$, then
\begin{equation*}
m_d=p_1^{c_1-e_1-1}(p_1-1)/2 = |X_d|/2.
\end{equation*}
So, $\langle \pi \rangle$ has 2 orbits of length $m_d$ on $X_d$. There are $c_1$ choices for $e_1$, so we obtain $2c_1$ orbits of length $m_d$ as $d$ runs through all divisors of $n$ of the form $n/p_{1}^{c_1-e_1}$. If $j \ne 1$ and $d=n/p_{j}^{c_j-e_j}$ for some $0 \le e_j < c_j$, then as under condition (i), $\langle \pi \rangle$ has a single orbit of length $m_d$ on $X_d$. So, under condition (ii), the total number of orbits of $\langle \pi \rangle$ on $X$ is $2c_1 + c_2 + \cdots + c_k$. By assumption, both $c_1$ and $c_1 + c_2 + \cdots + c_k$ are odd, so $2c_1 + c_2 + \cdots + c_k$ is even. We conclude that, once again, $\langle \pi \rangle$ has an even number of orbits of even length on $X$.

The theorem now follows by Lemma \ref{lem:orthsaregood}.
\end{proof}

From Theorems \ref{thm:pqfulldiam} and \ref{thm: exponent sum}, we see that there are infinitely many values of $n \equiv 3 \pmod{4}$ such that $h(n)=(n-1)/2$. However, there exist integers to which these theorems do not apply, such as primes congruent to $3 \pmod{4}$, or certain composite integers such as 27 or 75. For some of these values of $n$, we have been able to determine $h(n)$ via ad hoc or computational methods. We discuss these cases in the next two sections.

\section{Results for some specific $n \equiv 3 \pmod{4}$}\label{sec: specific n=3 mod 4}

In this section, we examine the value of $h(n)$ for some particular integers $n \equiv 3 \pmod{4}$ for which Theorem \ref{thm: exponent sum} does not apply. From Theorem \ref{thm: n=3 mod 4 bounds}, we know that $h(n)$ is equal to either $(n-3)/2$ or $(n-1)/2$. To conclude that $h(n)=(n-1)/2$, it is enough to find one $\pi \in A_n$ such that $h([\pi])=(n-1)/2$.

\begin{defn}\label{def: witness}
Let $n \ge 3$ be odd. We call $\pi \in A_n$ a \textit{witness} for $n$ if $\pi$ satisfies condition (i)--(iii) of Lemma \ref{lem:orthsaregood}(2).
\end{defn}

With this terminology, $h(n)=(n-1)/2$ if and only if a witness for $n$ exists. Furthermore, if $\pi$ is a witness for $n$, then $\pi$ is an orthomorphism on $\Z_n$, although the converse does not hold. For $n=7$, we can prove that no orthomorphism can be a witness, and hence conclude that $h(7)=2$.

\begin{lem}\label{lem:orthos in A_7}
There is no orthomorphism on $\Z_7$ having cycle structure 1-2-4 (i.e.\ having the form $(a_0)(a_1 , a_2)(a_3 , a_4 , a_5 , a_6))$.
\end{lem}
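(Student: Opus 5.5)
We need to show that no permutation $\pi$ of $\Z_7$ with cycle type $1$-$2$-$4$ is an orthomorphism, i.e.\ has $\pi-\id$ a permutation. The plan is to cut the search space by symmetry and then exhaust what remains, by hand or with a short GAP check.

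\textbf{Step 1: normalize by symmetry.} The orthomorphism property is preserved under two operations:
\begin{enumerate}[(a)]
\item conjugation by a translation $x\mapsto x+t$, since $(\pi-\id)$ becomes a translate-conjugate of itself;
\item conjugation by a multiplication $x\mapsto ux$ with $u\in\Z_7^\times$, since $x\mapsto u^{-1}\pi(ux)$ minus $\id$ equals $u^{-1}(\pi-\id)(u\,\cdot)$.
\end{enumerate}
Both operations preserve cycle type. So the affine group $\mathrm{AGL}(1,7)$, of order $42$, acts on the set of orthomorphisms with cycle type $1$-$2$-$4$. Using translations we may assume the fixed point is $0$. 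Using multiplications we may assume the transposition is $(1,a)$ for some $a\in\{2,\dots,6\}$.

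\textbf{Step 2: restrict $a$.} The difference map $\pi-\id$ sends $0\mapsto 0$, $1\mapsto a-1$ and $a\mapsto 1-a$. Since $\pi-\id$ must be injective and $0$ is already taken by the fixed point, we need $a-1\neq 0$ and $a-1\neq 1-a$. Both hold automatically because $7$ is odd and $a\neq 1$. Of the $7!$ permutations, only a few hundred have cycle type $1$-$2$-$4$, and after the normalization only $5$ choices of $a$ times $3!=6$ orderings of the $4$-cycle on the remaining four elements are left. That is $30$ cases, reducible further because $x\mapsto -x$ sends $(1,a)$ to $(6,-a)$, which can be rescaled to $(1,\cdot)$.

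\textbf{Step 3: exhaust the cases.} For each $a$ and each $4$-cycle $(b_1,b_2,b_3,b_4)$ on $\Z_7\setminus\{0,1,a\}$, compute the differences $b_2-b_1,\,b_3-b_2,\,b_4-b_3,\,b_1-b_4$. These must be exactly the four nonzero residues not in $\{a-1,\,1-a\}$. A useful pruning fact is that the four differences sum to $0$. The two excluded values $\pm(a-1)$ also sum to $0$, and the full set of nonzero residues sums to $0$, so this constraint is automatically consistent and does not help. Instead I will prune with the condition that the four required differences form a set of the form $\{\pm r,\pm s\}$, and that a closed $4$-walk using each of them once must land on the prescribed vertex set.

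\textbf{Main obstacle.} The key difficulty is that Step 3 is a genuine case check with no obvious structural shortcut. I would either present a small table of the roughly $15$ cases or cite a GAP enumeration, matching the paper's computational style. A cleaner alternative I would try first is a counting or character-sum argument: for an orthomorphism, $\sum_x \omega^{\pi(x)-x}=0$ for a nontrivial $7$th root of unity $\omega$. That argument may not exclude the cycle type, so the exhaustive check is the fallback.
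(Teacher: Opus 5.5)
Your symmetry reduction is correct, and it organizes the case analysis differently from the paper. Conjugating by $x\mapsto x+t$ and by $x\mapsto ux$ preserves both the cycle type and the orthomorphism property. So you may take the fixed point to be $0$ and the transposition to be $(1,a)$. The $4$-cycle then lives on $T_a=\Z_7\setminus\{0,1,a\}$, and its step set must be $\Z_7^\times\setminus\{\pm(a-1)\}$. (The extra reduction you attribute to $x\mapsto -x$ really comes from rescaling by $a^{-1}$, which identifies $a$ with $a^{-1}$; multiplication by $-1$ adds nothing.)

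The paper instead normalizes the \emph{differences}. It writes $d=a_2-a_1$ and shows the $4$-cycle must be $(a,\,a+d_1,\,a+d_1+d_2,\,a+d_2)$ with $\{\pm d_1,\pm d_2\}=\Z_7^\times\setminus\{\pm d\}$. It leaves the translate $a$ free and checks, in six cases, that the three points off the $4$-cycle never contain a pair at distance $\pm d$.

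As written, your proposal stops where the content is. Step 3 is only promised, and you say it has no structural shortcut. The shortcut is the one the paper uses. The step set has the form $\{\pm r,\pm s\}$. Two cyclically adjacent steps cannot be negatives of each other, since the cycle would then revisit a point. Hence the steps occur in the order $r,s,-r,-s$, and the support of the $4$-cycle is a ``parallelogram'' $\{b,\,b+r,\,b+s,\,b+r+s\}$: two pairs at distance $\pm r$, differing by a shift of $\pm s$.

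With this, your five cases each close in one line:
\begin{enumerate}[(1)]
\item For $a=4,5$ the steps are $\{\pm1,\pm2\}$, which forces the support to be four consecutive residues. Neither $T_4=\{2,3,5,6\}$ nor $T_5=\{2,3,4,6\}$ is.
\item For $a=3,6$ the steps are $\{\pm1,\pm3\}$, which forces a split into two pairs of consecutive residues shifted by $\pm3$. In $T_3=\{2,4,5,6\}$ the element $2$ has no neighbour at all. In $T_6=\{2,3,4,5\}$ the only such split, $\{2,3\},\{4,5\}$, has shift $2$.
\item For $a=2$ the steps are $\{\pm2,\pm3\}$. The only split of $T_2=\{3,4,5,6\}$ into pairs at distance $\pm2$ is $\{3,5\},\{4,6\}$, whose shift is $1$, not $\pm3$.
\end{enumerate}
With this added, your affine normalization gives a proof at least as short as the paper's table. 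Without it you are left with an unexecuted $30$-case enumeration or a GAP call, which would succeed but is not yet a proof.
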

\begin{proof}
Suppose that $\pi = (a_0)(a_1 , a_2)(a_3 , a_4 , a_5 , a_6)$ is an orthomorphism on $\Z_7$ and let $\delta = \pi-\id$. Then, $\delta$ is a permutation on $\Z_7$ and $a_0^\delta = 0$. Let $d = a_1^\delta=a_2-a_1$, and note that $a_2^\delta = -d$. So, there exist $u, v \in \Z_7^\times$ such that $\{a_i^\delta : 3 \le i \le 6\} = \{ \pm u, \pm v\}$. Let $d_1 = \min\{ \pm u, \pm v\}$ (here, we view $\Z_7^\times$ as $\{1, 2, \ldots, 6\}$, so that a minimum value for $\{ \pm u, \pm v\}$ is well-defined). Without loss of generality, we may assume that $a_3^\delta = d_1$. Once this is done, let $d_2 = a_4^\delta$, which cannot equal $\pm d_1$. This forces $a_5^\delta = -d_1$ and $a_6^\delta=-d_2$. With this notation, $\pi$ has the form
\begin{equation*}
\pi = (a_0)(a_1, \, a_2)(a, \, a+d_1, \, a+d_1+d_2, a+d_2)
\end{equation*}
for some $a \in \Z_7$. Moreover, we may list the elements of $\Z_7$ as $a, a+1, \ldots, a+6$.

We will consider six cases depending on $d$, $d_1$, and $d_2$. These are summarized in Table \ref{Z7 Table}. In each case, after a choice has been made for $d$, the value of $d_1$ is forced because we have selected $d_1 = \min(\Z_7^\times \setminus \{\pm d\})$. Then, $d_2 \in \Z_7^\times \setminus\{ \pm d, \pm d_1\}$, so there are two choices for $d_2$. After specifying $d_1$ and $d_2$, we let $\sigma = (a, \, a+d_1, \, a+d_1+d_2, \, a+d_2)$ be the 4-cycle in $\pi$. Once $\sigma$ is known, $a_1$ and $a_2$ must come from the three elements of $\Z_7$ that are fixed by $\sigma$.

\begin{table}[ht]
\centering
\begin{tabular}{c|c|c|c|c}
$d$ & $d_1$ & $d_2$ & $\sigma$ & possible $a_1$, $a_2$\\
\hline
$\pm 1 \pmod{7}$ & 2 & $\begin{matrix} 3 \\ 4 \end{matrix}$ & $\begin{matrix} (a, \, a+2, \, a+5, \, a+3) \\ (a, \, a+2, \, a+6, \, a+4) \end{matrix}$ & $\begin{matrix} a+1,\, a+4,\, a+6\\ a+1,\, a+3,\, a+5\end{matrix}$ \\
\hline
$\pm 2 \pmod{7}$ & 1 & $\begin{matrix} 3 \\ 4 \end{matrix}$ & $\begin{matrix} (a, \, a+1, \, a+4, \, a+3) \\ (a, \, a+1, \, a+5, \, a+4) \end{matrix}$ & $\begin{matrix} a+2,\, a+5,\, a+6\\ a+2,\, a+3,\, a+6\end{matrix}$ \\
\hline
$\pm 3 \pmod{7}$ & 1 & $\begin{matrix} 2 \\ 5 \end{matrix}$ & $\begin{matrix} (a, \, a+1, \, a+3, \, a+2) \\ (a, \, a+1, \, a+6, \, a+5) \end{matrix}$ & $\begin{matrix} a+4,\, a+5,\, a+6\\ a+2,\, a+3,\, a+4\end{matrix}$ \\
\end{tabular}
\caption{Restrictions on an orthomorphism on $\Z_7$ with cycle structure 1-2-4.}
\label{Z7 Table}
\end{table}

In each case, it is impossible to choose $a_1$ and $a_2$ from among the possible choices and have $a_2=a_1+d$. We conclude that $\Z_7$ has no orthomorphism with cycle structure 1-2-4.
\end{proof}

\begin{thm}\label{thm:h(7)=(n-3)/2}
$h(7)=2$.
\end{thm}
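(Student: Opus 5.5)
By Theorem \ref{thm: n=3 mod 4 bounds}, $h(7)$ equals either $(7-3)/2 = 2$ or $(7-1)/2 = 3$. The plan is to exclude the value $3$ by showing that no witness for $7$ exists, and then apply Lemma \ref{lem:orthsaregood}(2), which says $h(7)=3$ if and only if some $\pi\in A_7$ satisfies conditions (i)--(iii) there for every element of $[\pi]$. So I will suppose such a $\pi$ exists and derive a contradiction.

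First I pin down the cycle type that a witness for $7$ must have. Conditions (i) and (ii) say that $\pi$ itself (the case $j=0$) has exactly one fixed point and that the other $6$ points lie in cycles of even length. Condition (iii) says the number of even cycles is even. The even partitions of $6$ are $6$, $4+2$, and $2+2+2$, with $1$, $2$, and $3$ parts respectively. Only $4+2$ has an even number of parts, so $\pi$ has cycle structure 1-2-4.

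Next I show that $\pi$ is an orthomorphism. By condition (i), each element of $[\pi]$ has a unique fixed point. Lemma \ref{lem:orthsaregood}(1) then gives that $\pi$ (and also $\pi^{-1}$) is an orthomorphism on $\Z_7$. So $\pi$ is an orthomorphism on $\Z_7$ with cycle structure 1-2-4, which contradicts Lemma \ref{lem:orthos in A_7}. Hence no witness exists and $h(7)=2$.

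There is essentially no obstacle here. All the combinatorial work is done in Lemma \ref{lem:orthos in A_7}, and the only points needing care are enumerating the partitions of $6$ into even parts and noting that the equivalence (i)$\Leftrightarrow$(iii) of Lemma \ref{lem:orthsaregood}(1) applies to $\pi$ itself, not just to $\pi^{-1}$.
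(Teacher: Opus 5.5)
Your proof is correct and matches the paper's argument: you use the bounds from Theorem \ref{thm: n=3 mod 4 bounds}, show via Lemma \ref{lem:orthsaregood}(2) that the cycle type is forced to be 1-2-4, and reach a contradiction with Lemma \ref{lem:orthos in A_7}. The only cosmetic difference is that you apply (iii)$\Rightarrow$(i) of Lemma \ref{lem:orthsaregood}(1) to $\pi$ directly, whereas the paper passes to $\pi^{-1}$ (which has the same cycle type); both routes are valid.
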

\begin{proof}
By Theorem \ref{thm: n=3 mod 4 bounds}, either $h(7)=2$ or $h(7)=3$. If $h(7)=3$, then by Lemma \ref{lem:orthsaregood} there exists $\pi \in A_7$ such that each $\sigma \in [\pi]$ has exactly one fixed point, the cycle decomposition for $\sigma$ contains a single cycle of odd length, and the remaining elements are contained in an even number of disjoint cycles of even length. Since $n=7$, the only cycle structure satisfying these requirements is 1-2-4. Thus, $\pi$ must have this cycle structure. However, since each element of $[\pi]$ has a unique fixed point, $\pi^{-1}$ is an orthomorphism by Lemma \ref{lem:orthsaregood}. This contradicts Lemma \ref{lem:orthos in A_7}, because $\pi^{-1}$ also has cycle structure 1-2-4. Consequently, no such $\pi$ exists, and therefore $h(7)=2$.
\end{proof}

We are able to prove a similar result for $n = 11$ computationally. Our approach here is to first identify all orthomorphisms on $\Z_{11}$, and then show that none of these permutations is a witness for $n=11$.

\begin{thm}
 \label{thm:11}
 $h(11) = 4$.
\end{thm}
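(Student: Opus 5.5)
By Theorem \ref{thm: n=3 mod 4 bounds}, $h(11)$ is either $4$ or $5$, so the task is to rule out $h(11)=5$. By Lemma \ref{lem:orthsaregood}(2), $h(11)=5$ holds if and only if a witness for $11$ exists. A witness is in particular an orthomorphism on $\Z_{11}$: each element of $[\pi]$ has a unique fixed point, so $\pi^{-1}$, and hence $\pi$, is an orthomorphism by Lemma \ref{lem:orthsaregood}(1). The plan is therefore to enumerate all orthomorphisms on $\Z_{11}$ by computer (GAP) and check that none of them is a witness.

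\textbf{Reducing the search.} First I cut down the search space using symmetries that preserve both the orthomorphism property and the witness conditions.
\begin{itemize}
\item Conjugating $\pi$ by a translation $x \mapsto x+t$ commutes with $c$. It therefore maps the coset $[\pi]$ to the coset of the conjugate and preserves cycle structures, so we may assume $0^\pi = 0$ is the fixed point of $\pi$ itself.
\item Conjugating by a multiplier $x \mapsto xu$ with $u \in \Z_{11}^\times$ sends $c$ to a generator of $\langle c \rangle$, so it also permutes cosets compatibly. This lets us normalize further, for instance fixing the value of $1^\pi$ up to the action of $\Z_{11}^\times$.
\item Remark \ref{rem: h(pi) and h(pi^-1)} allows passing to inverses if convenient.
\end{itemize}
We then enumerate, by backtracking, all bijections $\pi$ with $\pi(0)=0$ and $\pi-\id$ injective. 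The number of orthomorphisms of $\Z_{11}$ is known and manageable, a few thousand up to normalization, so this is feasible.

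\textbf{Testing each candidate.} For each orthomorphism $\pi$ in $A_{11}$, I compute the cycle types of $c^j\pi$ for $0 \le j \le 10$. I then check the witness conditions: exactly one fixed point, every other cycle of even length, and an even number of even cycles. For $n=11$ the possible witness cycle types are $1$-$2$-$8$, $1$-$4$-$6$, $1$-$2$-$2$-$2$-$4$, and similar types with an even number of even parts summing to $10$. A cheap early filter is that $\pi$ itself must already have one of these types, before the other rotations are examined. The computation should report that no orthomorphism passes for all eleven $j$. Hence no witness exists and $h(11) = (11-3)/2 = 4$. As a certificate of the lower bound, Proposition \ref{prop: n=3 mod 4} already gives an explicit $\pi$ with $h([\pi])=4$.

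\textbf{Main obstacle.} The main obstacle is making the exhaustive search trustworthy: verifying that the enumeration really captures every orthomorphism and that the symmetry normalizations are valid. A hand proof in the style of Lemma \ref{lem:orthos in A_7} seems impractical here because of the many admissible cycle types. I would cross-check by also running the unnormalized search, or by comparing the count of orthomorphisms found against the known total for $\Z_{11}$.
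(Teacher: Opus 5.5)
Your plan is correct and is essentially the paper's proof. The paper likewise reduces to $h(11)\in\{4,5\}$ and then uses GAP: it finds the $22308$ orthomorphisms in $A_{11}$, of which $13310$ have an admissible cycle type (one fixed point plus an even number of even cycles), and checks that each such coset $[\pi]$ contains an element with more than one odd cycle. Your translation, multiplier and inversion normalizations are valid but unnecessary; note only that conjugating by $x\mapsto xu$ lets you move $1^\pi$ to some value of the ratio $x^\pi x^{-1}$, not to an arbitrary prescribed value.
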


\begin{proof}
 By Theorem \ref{thm: n=3 mod 4 bounds}, either $h(11)=4$ or $h(11)=5$.  By computation in GAP \cite{GAP4}, there are exactly 22308 elements of $A_{11}$ that are orthomorphisms, and exactly 13310 of these orthomorphisms can be expressed as a product of a single cycle of length one along with an even number of disjoint cycles of even length.  However, for each of these elements $\pi$, the coset $[\pi]$ contains at least one permutation having more than one cycle of odd length. 
 By Lemma \ref{lem:orthsaregood}, this implies that $h(11) \neq 5$, i.e., $h(11) = 4$.
\end{proof}

For larger values of $n$, we have not found any examples for which $h(n)=(n-3)/2$, but nor have we found a general method to produce a witness for $n$. Overall, finding orthomorphisms with a given cycle structure seems to be a difficult problem; however, recent progress was made in \cite{BorsWang2022}, where the authors constructed complete mappings and orthomorphisms with a given cycle structure. Following the work done in \cite{BorsWang2022}, we have been able to construct witnesses for some larger values of $n$. The permutations produced by these techniques are piecewise functions that are linear on cosets of a subgroup of the unit group of $\Z_n$. We illustrate this type of function in Example \ref{ex: n=19} below.

\begin{ex}\label{ex: n=19}
Let $K$ be the subgroup of $\Z_{19}^\times$ generated by 8. Then, $K = \{8,7,18,11,12,1\}$ in $\Z_{19}^\times$. The cosets of $K$ are $K$ itself,
    $$
    2K = 3K = \{16,14,17,3,5,2\},
    $$
    and
     $$
    4K = 9K = \{13,9,15,6,10,4\}.
    $$ 
Define the following function $\pi$ on $\Z_{19}$:
    $$
    x^\pi := \begin{cases}
        0, & \text{ if }x=0 \\
        -x, & \text{ if }x\in K \\
        3x, & \text{ if }x\in 2K \\
        6x, & \text{ if }x\in 4K
    \end{cases}.
    $$
It is easily seen that $\pi$ is a bijection, and when written in cycle notation, $\pi$ is
\begin{equation*}
\pi = (0)(1, 18) (2,6,17,13)(3,9,16,10)(4,5,15,14)(7,12)(8,11).
\end{equation*}
One may check that $\pi$ is an orthomorphism on $\Z_{19}$, and is a witness for $n=19$. Thus, $h(19)=9$.
\end{ex}

In the spirit of Example \ref{ex: n=19} (and with the aid of computations in GAP), we have produced witnesses for various primes $n \equiv 3 \pmod{4}$, along with $n=27$. Witnesses for further values of $n$ were found by using the large language model Claude Fable 5. We discuss all results obtained with the help of AI models in Section \ref{sec: Claude}.

\begin{thm}
 \label{thm:largesporadicn}
 We have $h(n)=(n-1)/2$ for the following values of $n$:
 \begin{equation*}
 n = 19, 23, 27, 71, 103, 127, 271, 571, 631, 883, 991, 1039, 1171, 1279, 1579, 1723.
 \end{equation*}
\end{thm}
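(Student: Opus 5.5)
By Theorem \ref{thm: n=3 mod 4 bounds} and Lemma \ref{lem:orthsaregood}(2), it suffices to exhibit, for each listed $n$, a single witness $\pi \in A_n$. That is, we need a permutation such that every $c^j\pi$ has a unique fixed point, all of its other points lie in cycles of even length, and the number of even-length cycles is even. For $n=19$ this is Example \ref{ex: n=19}. For the remaining values, the plan is to search for witnesses of the same piecewise-linear shape as in that example, following \cite{BorsWang2022}, and then verify each candidate directly.

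\emph{Shape of the candidates.} For prime $n$, choose a subgroup $K \le \Z_n^\times$ of small index $e$, fix coset representatives $g_1,\dots,g_e$, and set $0^\pi=0$ and $x^\pi = \lambda_i x$ for $x \in g_iK$. The multipliers $\lambda_i$ must be chosen so that $\pi$ is a bijection; this holds exactly when the cosets $\lambda_i g_iK$ are again a permutation of the cosets. Then $\pi$ is an orthomorphism exactly when the sets $(\lambda_i-1)g_iK$ (for $x \ne 0$) also permute the cosets. Since $\pi^{-1}$ is then an orthomorphism as well, Lemma \ref{lem:orthsaregood}(1) shows that every element of $[\pi]$ has a unique fixed point.

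For $n=27$, which is not prime, we use the analogous construction on the layers $X_d$ of Lemma \ref{lem:ordersandorbits}. We would take $\pi$ linear or piecewise linear on each $X_d$, with multipliers chosen so that the parity problem that defeats Theorem \ref{thm: exponent sum} (the exponent sum $3$ is odd and $3 \not\equiv 1 \pmod 4$) is repaired by splitting one layer into an extra orbit.

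\emph{Verification.} The searches themselves are only feasible in GAP, over the choices of $K$, coset representatives, and multipliers $\lambda_i$. Once a candidate $\pi$ is in hand, checking it is a finite computation. For each $0\le j\le n-1$ we compute the cycle type of $c^j\pi$ and confirm conditions (i)--(iii) of Lemma \ref{lem:orthsaregood}(2). We would record the explicit witnesses in a table, giving $K$, the coset representatives and the $\lambda_i$, so the reader can recheck them. Then $h(n)=(n-1)/2$ follows from Lemma \ref{lem:orthsaregood}(2).

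\emph{Main obstacle.} Piecewise-linear maps do not commute with $c$ the way the purely linear maps of Lemma \ref{lem:goodlinear} do, so the cycle structure of $c^j\pi$ is not controlled by that of $\pi$. Conditions (ii) and (iii) therefore have to be checked for every $j$ separately, and there is no uniform argument. The genuinely hard part is finding candidates at all: for primes $n\equiv 3\pmod 4$ no linear map works, since $\Z_n^\times$ has a unique element of order two. My first attempt would take $K$ of index $2$ or $3$ containing $-1$ when possible, let $\pi$ act as $-x$ on $K$ as in Example \ref{ex: n=19}, and search over the remaining multipliers. Only if that search fails would I enlarge the index.
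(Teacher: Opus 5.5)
Your reduction and verification procedure are the same as the paper's. By Lemma \ref{lem:orthsaregood}(2) it suffices to exhibit one witness for each listed $n$, the case $n=19$ is Example \ref{ex: n=19}, and checking a given candidate is a finite cycle-type computation over all $j$. The gap is that for every other $n$ you stop at describing a search, whereas the theorem is precisely the claim that such a search succeeds. Nothing in your argument shows that your coset-wise linear family contains a witness for $n=23,71,\dots,1723$. For $n=27$, the phrase ``splitting one layer into an extra orbit'' does not even specify a candidate.

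Existence cannot be taken for granted here. For $n=7$ and $n=11$ no witness exists at all (Section \ref{sec: specific n=3 mod 4}), and no general construction is known, so each listed $n$ has to be certified by an explicit permutation. That certificate is the entire content of the paper's proof: the witnesses are written out in the linked file. As the paper explains, they were found partly by GAP search and partly from Bors--Wang-type piecewise-linear constructions much like the one you propose. The reader then checks them by exactly the computation you describe. As written, your argument establishes the statement only for $n=19$.

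Two of your heuristic remarks are also inaccurate.
\begin{enumerate}
\item Linear maps fail for prime $n\equiv 3 \pmod 4$ because $n-1\equiv 2 \pmod 4$. If $a$ has even order $m$, the number $(n-1)/m$ of nontrivial cycles of $x\mapsto ax$ is then odd. Uniqueness of the involution is not the reason: it holds for every odd prime, including $n\equiv 1\pmod 4$, where $x\mapsto -x$ works.
\item Since $(n-1)/2$ is odd, a subgroup $K\le \Z_n^\times$ contains $-1$ exactly when its index is odd. So your index-$2$ option never applies. For $n=23$ and $n=71$ the smallest indices greater than $1$ available are $11$ and $5$, respectively.
\end{enumerate}
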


\begin{proof}
A witness for $n=19$ is shown in Example \ref{ex: n=19}. Permutations that serve as witnesses for the other values of $n$ are given in \url{https://github.com/EricSwartz/CircularSorting/blob/main/Witnesses.txt}. 
\end{proof}

\section{Results obtained with the aid of Claude}\label{sec: Claude}

All results in this section were obtained either in whole or in part by the large language models Claude Fable 5 and Claude Opus 5. The AI produced one notable general result (Theorem \ref{thm:compositewitnesstype}) and found witnesses for a number of values of $n$ beyond those mentioned in Section \ref{sec: specific n=3 mod 4}.

\begin{defn}\label{def: witness-type}
For all $\sigma \in S_n$, let $\sgn(\sigma)=1$ if $\sigma$ is even, and $\sgn(\sigma)=-1$ if $\sigma$ is odd. We say that a permutation $\pi$ in $S_n$ is \textit{witness-type} if all elements of the coset $\langle c \rangle \pi$ are permutations that have a unique fixed point and all other elements of $\{0, 1, \ldots, n-1\}$ are contained in cycles of even length.  Note that a witness-type permutation $\pi$ such that $\sgn(\pi) = 1$ is simply a witness in the sense of Definition \ref{def: witness}.  If a permutation $\pi$ is witness-type and $\sgn(\pi) = -1$, then we say that $\pi$ is a \textit{$(-1)$-witness}. 
\end{defn}

The following result is inspired by results that take complete mappings or orthomorphisms in subgroups and/or quotients to construct complete mappings or orthomorphisms in the larger group; see, e.g., \cite[Theorem 4]{Paige1951}, \cite[Corollary 2]{HallPaige1955}, \cite[Theorem 8.1]{Evans}, \cite[Lemma 2.1, Proposition 2.4]{AkhtarGagola}, \cite[Theorem 1.3]{BorsWang2022-3}, and especially the wreath product constructions in \cite[Section 4 and Lemma 28]{bastide2025circularsortingstrongcomplete}.

\begin{thm}
\label{thm:compositewitnesstype}
Let $n_1, n_2 \ge 3$ be odd and let $n=n_1 n_2$. Let $\sigma \in S_{n_1}$ and $\gamma \in S_{n_2}$ be permutations of witness-type. Then, there exists a permutation $\pi \in S_n$ of witness-type such that $\sgn(\pi) = \sgn(\sigma)\sgn(\gamma)$.
\end{thm}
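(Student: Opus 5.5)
We propose the following ``lexicographic'' product construction. Write each $x\in\Z_n$ uniquely as $x=a+n_1b$ with $a\in\{0,\ldots,n_1-1\}$ and $b\in\Z_{n_2}$, and write $x=(a,b)$. Note that we do \emph{not} need $\gcd(n_1,n_2)=1$, since this is the mixed-radix representation rather than the Chinese Remainder Theorem. Regard $\sigma$ as a permutation of $\Z_{n_1}$ and $\gamma$ as a permutation of $\Z_{n_2}$, and define
\begin{equation*}
(a,b)^\pi=(a^\sigma,\,b^\gamma).
\end{equation*}
We must show two things: that every $c^j\pi$ has a unique fixed point with all other points in cycles of even length, and that $\sgn(\pi)=\sgn(\sigma)\sgn(\gamma)$.

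\textbf{Sign.} Write $\pi=\alpha\beta$, where $\alpha(a,b)=(a^\sigma,b)$ and $\beta(a,b)=(a,b^\gamma)$. The map $\alpha$ is a disjoint product of $n_2$ copies of $\sigma$, one on each layer $\{(a,b)\}_a$, so $\sgn(\alpha)=\sgn(\sigma)^{n_2}=\sgn(\sigma)$ because $n_2$ is odd. Similarly, $\sgn(\beta)=\sgn(\gamma)^{n_1}=\sgn(\gamma)$.

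\textbf{Witness-type.} Fix $j=j_1+n_1j_2$ with $0\le j_1<n_1$. Then
\begin{equation*}
x+j=\bigl(a+j_1 \bmod n_1,\; b+j_2+\varepsilon(a)\bigr),
\end{equation*}
where the carry $\varepsilon(a)\in\{0,1\}$ depends only on $a$ (and $j_1$). Hence
\begin{equation*}
(a,b)^{c^j\pi}=\bigl(a^{c^{j_1}\sigma},\; b^{c^{j_2+\varepsilon(a)}\gamma}\bigr),
\end{equation*}
so $c^j\pi$ is a skew product over the base permutation $\tau=c^{j_1}\sigma$ of $\Z_{n_1}$, with fiber bijections $\phi_a=c^{j_2+\varepsilon(a)}\gamma$.

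Since $\sigma$ is witness-type, $\tau$ has a unique fixed point $a_0$, and all other base points lie in cycles of even length. Any fixed point of $c^j\pi$ must lie over $a_0$. There the map is $\phi_{a_0}=c^{k}\gamma$ for a single $k$, which is witness-type because $\gamma$ is. So the fiber over $a_0$ contributes exactly one fixed point, and all its other points lie in even cycles.

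Now take a base cycle $(a_1,\ldots,a_L)$ of $\tau$ with $L$ even. Let $\psi=\phi_{a_1}\cdots\phi_{a_L}$ be the return map on the fiber over $a_1$. The orbit of $(a_1,b)$ under $c^j\pi$ then has length $L\cdot(\text{length of the $\psi$-cycle of } b)$, which is even. Since every point over $a_t$ lies in the orbit of some point over $a_1$, all points outside the fiber over $a_0$ lie in even cycles. This gives the witness-type property for every $j$.

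\textbf{Main obstacle.} The step needing care is the carry. Addition in $\Z_n$ is not coordinatewise, so the fiber shift varies with $a$. The skew-product viewpoint handles this: over the unique base fixed point the shift is constant, so $c^k\gamma$ being witness-type applies directly. Over even base cycles, evenness of the orbit length is automatic no matter what the fiber bijections are.
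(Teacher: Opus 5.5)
Your proof is correct and is essentially the paper's argument: the same product permutation $(x_1,x_2)\mapsto(x_1^\sigma,x_2^\gamma)$ under a mixed-radix identification of $\Z_n$ (the paper takes $\Z_{n_2}$ as the low digit, you take $\Z_{n_1}$), the same observation that the unique fixed point lies over the unique fixed point of the uniform low-digit map, where the high-digit map is a single element of the other coset, with every other point in a cycle whose length is a multiple of an even low-digit cycle length, and the same layer-counting sign computation. Your skew-product treatment of the carry is, if anything, more careful than the paper's displayed orbit formula, which writes the high-digit action as a fixed $c_1^{j_1}\sigma$ along the whole orbit even though the carry can change from step to step; the paper only uses that formula over the low-digit fixed point, where it is valid.
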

\begin{proof}
We will let $S_n$ act on $\Z_{n_1} \times \Z_{n_2}$. Order $\Z_{n_1} \times \Z_{n_2}$ lexicographically, so that $(x_1, x_2) < (y_1, y_2)$ if and only if either $x_1 < y_1$ or $x_1=y_1$ and $x_2 < y_2$. Let $c \in S_n$ be the cycle corresponding to this order, i.e.\
\begin{equation*}
c = \big( (0,0), \, (0,1), \, \ldots, \, (0,n_2-1), \, (1,0), \ldots, (n_1-1, n_2-1) \big).
\end{equation*}
Then, for any $(x_1, x_2) \in \Z_{n_1} \times \Z_{n_2}$,
\begin{equation}\label{eq: action of c}
(x_1, x_2)^c = \begin{cases} (x_1, \, x_2+1), & \text{if } x_2 < n_2-1\\ (x_1+1, \, x_2+1), & \text{if } x_2 = n_2-1 \end{cases},
\end{equation}
where the first entry of each ordered pair is reduced modulo $n_1$, and the second entry is reduced modulo $n_2$. By iterating \eqref{eq: action of c}, we obtain a formula for the action of $c^k$ on $\Z_{n_1} \times \Z_{n_2}$ for any $k \ge 0$. First, given $k$, let $0 \le k_1 \le n_1-1$ and $0 \le k_2 \le n_2-1$ be such that $k=k_2 + k_1n_2$. Then, for all $(x_1, x_2) \in \Z_{n_1} \times \Z_{n_2}$,
\begin{equation}\label{eq: action of c^k}
(x_1, x_2)^{c^k} = \begin{cases} (x_1+k_1, \, x_2+k_2), & \text{if } x_2 + k_2 < n_2\\ (x_1+k_1+1, \, x_2+k_2), & \text{otherwise } \end{cases}.
\end{equation}

Now, define a permutation $\pi$ on $\Z_{n_1} \times \Z_{n_2}$ by $(x_1, x_2)^\pi = (x_1^\sigma, x_2^\gamma)$. Then, $\pi \in S_n$ and we will show that $\pi$ is of witness-type.

Let $k \ge 0$ and define $k_1$ and $k_2$ as above. For $i=1,2$, let $c_i = (0, \, 1, \, \ldots, \, n_i-1) \in S_{n_i}$. By \eqref{eq: action of c^k}, for each $(x_1, x_2) \in \Z_{n_1} \times \Z_{n_2}$ we have
\begin{align*}
(x_1, x_2)^{c^k \pi}  &= 
\begin{cases} 
\big( (x_1+k_1)^\sigma, \, (x_2+k_2)^\gamma \big), & \text{if } x_2 + k_2 < n_2\\
\big( (x_1+k_1+1)^\sigma, \, (x_2+k_2)^\gamma \big), & \text{otherwise} 
\end{cases}\\
&= 
\begin{cases} 
\big( x_1^{c_1^{k_1}\sigma}, \; x_2^{c_2^{k_2}\gamma} \big), & \text{if } x_2 + k_2 < n_2\\
\big( x_1^{c_1^{k_1+1}\sigma}, \; x_2^{c_2^{k_2}\gamma} \big), & \text{otherwise} 
\end{cases}.
\end{align*}
Since both $\sigma$ and $\gamma$ are of witness-type, each permutation $c_1^{k_1}\sigma$, $c_1^{k_1+1}\sigma$, and $c_2^{k_2}\gamma$ has a unique fixed point. Let $y_2 \in \Z_{n_2}$ be the fixed point of $c_2^{k_2}\gamma$, and note that $y_2$ depends only on $k$. Let $y_1 \in \Z_{n_1}$ be the fixed point of either $c_1^{k_1}\sigma$ or $c_1^{k_1+1}\sigma$, depending on whether or not $y_2 + k_2 < n_2$. Then, $(y_1, y_2)$ is fixed by $c^k\pi$. Moreover, if $(z_1, z_2) \in \Z_{n_1} \times \Z_{n_2}$ is fixed by $c^k\pi$, then we must have $z_2=y_2$. This implies that $z_1=y_1$. Thus, $c^k \pi$ has a unique fixed point in $\Z_{n_1} \times \Z_{n_2}$.

Next, let $(a_1, a_2) \in \Z_{n_1} \times \Z_{n_2}$. We will prove that $(a_1, a_2)$ is contained in an orbit even length under the action of $\langle c^k \pi \rangle$ unless $(a_1, a_2)$ is the unique fixed point $(y_1, y_2)$ of $c^k \pi$.  Let
\begin{equation*}
j_1 := \begin{cases} k_1, & \text{if } a_2+k_2 < n_2\\ k_1+1, & \text{otherwise } \end{cases}.
\end{equation*}
Then, for any $m \ge 0$,
\begin{equation}\label{eq: orbit length}
\big(a_1, a_2\big)^{(c^k \pi)^m} = \big(a_1^{(c_1^{j_1}\sigma)^m}, \; a_2^{(c_2^{k_2}\gamma)^m}\big).
\end{equation}
Assume now that
\[ \big(a_1, a_2\big)^{(c^k \pi)^m} = (a_1, a_2), \]
which by \eqref{eq: orbit length} implies that
\[ a_2^{(c_2^{k_2}\gamma)^m} = a_2.\]  We proceed by cases.  If $a_2 \neq y_2$, then $a_2^{c_2^{k_2}\gamma} \neq a_2$, and, since $\gamma$ is witness-type by assumption, this implies that $m$ is even and hence $(a_1, a_2)$ is contained in an orbit of even length under the action of $\langle c^k \pi \rangle$.  On the other hand, if $a_2 = y_2$ and $a_1 \neq y_1$, then
\[ \big(a_1, y_2\big) = \big(a_1, y_2\big)^{(c^k \pi)^m} = \big(a_1^{(c_1^{j_1}\sigma)^m}, \; y_2^{(c_2^{k_2}\gamma)^m}\big) = \big(a_1^{(c_1^{j_1}\sigma)^m}, \; y_2\big),\]
which implies that
\[ a_1^{(c_1^{j_1}\sigma)^m} = a_1.\]
Since $a_1 \neq y_1$ and $\sigma$ is witness-type by assumption, this implies $m$ is even and hence $(a_1, a_2)$ is contained in an orbit of even length under the action of $\langle c^k \pi \rangle$.  Hence, if $(a_1, a_2) \neq (y_1, y_2)$, $(a_1, a_2)$ lies in a cycle of $c^k \pi$ of even length. Since $k \ge 0$ was arbitrary, we conclude that $\pi \in S_n$ is of witness-type.

It remains to verify that $\sgn(\pi) = \sgn(\sigma)\sgn(\gamma)$. Note that $\pi = (\sigma, \id_2)(\id_1, \gamma)$, where $\id_i$ denotes the identity of $S_{n_i}$. Suppose that $\sigma$ can be written as a product of $t$ transpositions in $S_{n_1}$. Then, $(\sigma, \id_2)$ can be written as a product of $tn_2$ transpositions in $S_n$. Since $n_2$ is odd, we have $\sgn((\sigma, \id_2))=\sgn(\sigma)$. Similarly, $\sgn((\id_1,\gamma))=\sgn(\gamma)$. So, $\sgn(\pi) = \sgn(\sigma)\sgn(\gamma)$, which completes the proof.
\end{proof}

\begin{cor}\label{cor: Claude corollaries}\mbox{}
\begin{enumerate}[(1)]
\item Let $n_1, n_2 \ge 3$ be odd. If $h(n_1)=(n_1-1)/2$ and $h(n_2) = (n_2-1)/2$, then $h(n_1 n_2) = (n_1n_2-1)/2$.

\item Let $n \ge 3$ be odd and such that there exists both a witness and a $(-1)$-witness in $S_n$. Then, for any odd $k \ge 1$, $h(nk) = (nk-1)/2$.

\item Let $n$ be an odd integer with prime factors $p$ and $q$ such that $p \equiv 1 \pmod 4$ and $q \equiv 3 \pmod 4$.  Then, $h(n) = (n - 1)/2$.
\end{enumerate}
\end{cor}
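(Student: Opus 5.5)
All three parts will follow from Theorem \ref{thm:compositewitnesstype} together with the characterization in Lemma \ref{lem:orthsaregood}(2): for odd $n$, $h(n)=(n-1)/2$ if and only if a witness exists, i.e.\ a witness-type permutation of sign $+1$. One point needs care. The lemma is phrased with the specific cycle $c=(0,1,\ldots,n-1)$, while Theorem \ref{thm:compositewitnesstype} uses the lexicographic cycle on $\Z_{n_1}\times\Z_{n_2}$. Relabeling the pair $(x_1,x_2)$ as the integer $x_1n_2+x_2$ turns that lexicographic cycle into the standard $c$ on $\Z_n$. This relabeling is a conjugation, so cycle types and signs are preserved, and the witness-type permutation produced there transfers to a witness-type permutation in the sense of Definition \ref{def: witness-type} on $\Z_n$.

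For (1), the hypothesis and Lemma \ref{lem:orthsaregood}(2) give witnesses $\sigma\in A_{n_1}$ and $\gamma\in A_{n_2}$, each witness-type with sign $1$. Theorem \ref{thm:compositewitnesstype} then yields a witness-type $\pi\in S_{n_1n_2}$ with $\sgn(\pi)=1$. This $\pi$ is a witness, so $h(n_1n_2)=(n_1n_2-1)/2$.

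For (2), I take $k$ odd. If $k=1$, the hypothesis supplies a witness for $n$ and we are done. Otherwise $k\ge 3$, and I need a witness-type permutation of $\Z_k$ of some sign $\epsilon$. The natural candidate is $x\mapsto -x$, i.e.\ $\pi=\iota$. By the computation in the proof of Proposition \ref{prop: n=1 mod 4}, which uses only that $k$ is odd and not the congruence mod $4$, every $c^j\iota$ is an involution with a unique fixed point. So $\iota$ is witness-type on $\Z_k$, and its sign is $(-1)^{(k-1)/2}$. I then pick whichever of the witness and the $(-1)$-witness for $n$ has sign $\epsilon$, and apply Theorem \ref{thm:compositewitnesstype}. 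This gives a witness-type permutation of sign $\epsilon^2=1$ on $\Z_{nk}$, hence $h(nk)=(nk-1)/2$. Once $\iota$ is in hand this step is immediate; spotting that $\iota$ works for every odd $k$ is the only real idea.

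For (3), the plan is to exhibit both a witness and a $(-1)$-witness for $pq$ and then apply (2) with $k=n/(pq)$, which is odd. The witness comes from Theorem \ref{thm:pqfulldiam}. For a $(-1)$-witness, apply Theorem \ref{thm:compositewitnesstype} to $\iota$ on $\Z_p$ (sign $+1$, since $p\equiv1\pmod 4$) and $\iota$ on $\Z_q$ (sign $-1$, since $q\equiv3\pmod4$). The product has sign $-1$, which is exactly the $(-1)$-witness we need. I expect this last observation, that $\iota$ supplies witness-type permutations of both signs depending on the residue mod $4$, to be the main thing to get right. A small variant also works: apply (2) directly with $n=p$, using a witness and a $(-1)$-witness on $\Z_p$. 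However, producing a $(-1)$-witness on $\Z_p$ alone seems harder, so I will use the $pq$ route.
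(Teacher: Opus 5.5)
Your proposal is correct and matches the paper's proof of (1) and (2), including the use of $x\mapsto -x$ on $\Z_k$ as a witness-type permutation of sign $(-1)^{(k-1)/2}$ that is paired with a witness or $(-1)$-witness for $n$ of the same sign. In (3), the paper gets the $(-1)$-witness more directly: $x\mapsto -x$ on $\Z_{pq}$ is itself witness-type, and since $pq\equiv 3\pmod 4$ its sign is $(-1)^{(pq-1)/2}=-1$. Your detour through Theorem \ref{thm:compositewitnesstype}, combining $\iota$ on $\Z_p$ with $\iota$ on $\Z_q$, is valid but unnecessary.
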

\begin{proof}
(1) Assume $h(n_1)=(n_1-1)/2$ and $h(n_2) = (n_2-1)/2$. By Lemma \ref{lem:orthsaregood}, both $A_{n_1}$ and $A_{n_2}$ contain permutations of witness-type. By Theorem \ref{thm:compositewitnesstype}, $A_{n_1 n_2}$ contains a witness, so $h(n_1 n_2) = (n_1n_2-1)/2$.

(2) There is nothing to prove if $k=1$, so assume that $k \ge 3$. Since $k$ is odd, the map $\sigma: x \mapsto -x$ in $S_k$ is of witness-type; this follows mutatis mutandis from the proof of Proposition \ref{prop: n=1 mod 4}.  By hypothesis, there exists $\gamma \in S_n$ of witness-type such that $\sgn(\gamma) = \sgn(\sigma)$. By Theorem \ref{thm:compositewitnesstype}, $A_{nk}$ contains a witness.

(3) On the one hand, $h(pq) = (pq - 1)/2$ by Corollary \ref{cor:pq1and3mod4}, so $S_{pq}$ contains a witness for $pq$.  On the other hand, the map $x \mapsto -x$ is a $(-1)$-witness for $pq$. The result follows from part (2).
\end{proof}

\begin{rem}\label{rem: remaining cases}
By Corollary \ref{cor: Claude corollaries}(3) and Theorem \ref{thm: exponent sum}, we see that if there exists a counterexample $n$ to Conjecture \ref{conj: n=3 mod 4}, then $n$ has the form $n = p_1^{c_1} \cdots p_k^{c_k}$, where each $p_i$ is a prime congruent to $3 \pmod{4}$ and $c_1+\cdots+c_k$ is odd.
\end{rem}

Note that the permutations $\sigma$ and $\gamma$ in Theorem \ref{thm:compositewitnesstype} may be odd, even though the resulting permutation $(\sigma, \gamma)$ in $S_{n_1 n_2}$ is even. In the next example, we demonstrate odd permutations on $\Z_{3}$ and $\Z_{21}$ that can be used to produce a witness for $n=63$.
\begin{ex}\label{ex: n=63}
Let $n_1=3$, $n_2=21$, and $\sigma = (1,2) \in S_3$. In $S_{21}$, the map $\gamma_1: x \mapsto -x$ is witness-type with $\sgn(\gamma_1) = 1$ (see Proposition \ref{prop: n=1 mod 4}). Moreover, Claude Opus 5 produced the permutation 
\begin{equation*}
\gamma_2 = (0)(1,13,8,6,11,15,9,12,2,3,17,14)(4,10,19,18)(5,7,20,16),
\end{equation*}
which can be verified to be a $(-1)$-witness in $S_{21}$. By Corollary \ref{cor: Claude corollaries}(2), $h(63) = (63 - 1)/2=31$. A specific witness for $n=63$ can be calculated from $\sigma \in S_3$ and $\gamma_2 \in S_{21}$ as in the proof of Theorem \ref{thm:compositewitnesstype}.  The resulting permutation, as an element of $S_{63}$, is
\begin{align*}
\pi:=(0)&(1, 13, 8, 6, 11, 15, 9, 12, 2, 3, 17, 14)(4, 10, 19, 18)(5, 7, 20, 16)(21, 42)\\
&(22, 55, 29, 48, 32, 57, 30, 54, 23, 45, 38, 56)(24, 59, 35, 43, 34, 50, 27, 53, 36, 51, 33, 44)\\
&(25, 52, 40, 60)(26, 49, 41, 58)(28, 62, 37, 47)(31, 61, 39, 46)
\end{align*}
and $\pi$ is a witness for $n = 63$.
\end{ex}

Claude was also able to find witnesses for additional primes $p \equiv 3 \pmod{4}$. Much like the orthomorphisms studied in \cite{BorsWang2022}, these permutations are piecewise linear and defined by using subgroups and cosets in the unit group of $\Z_p$. We show the resulting permutation for $p=31$.

\begin{ex}\label{ex: n=31}
Let $K = \langle 26 \rangle = \{26,25,30,5,6,1\} \le \Z_{31}^\times$. The cosets of $K$ are $K$ itself,
    \begin{align*}
         3K &= \{16,13,28,15,18,3\},\\
         9K &= \{17,8,22,14,23,9\},\\
         19K &= \{29,10,12,2,21,19\}, \\
         27K &= \{20,24,4,11,7,27\}.\\
    \end{align*}
Define a permutation $\pi$ on $\Z_{31}$ by
    $$
    x^\pi := \begin{cases}
        0, & \text{ if }x=0 \\
        -x, & \text{ if }x\in K\\
        15x, & \text{ if }x\in 3K \\
        22x, & \text{ if }x\in 9K\\
        -2x, & \text{ if }x\in 19K \\
        20x, & \text{ if }x\in 27K 
    \end{cases}.
    $$
In cycle notation,
\begin{align*}
\pi = (0)(1, &30) (2,27,13,9,12,7,16,23,10,11,3,14,29,4,18,22,19,24,15,8,21,20,28,17)\\
& (5,26)(6,25).
\end{align*}
The verification that $\pi$ is a witness for $31$ is a straightforward calculation in GAP and is left to the reader.
\end{ex}

\begin{thm}\label{thm: Claude's witnesses}
We have $h(n)=(n-1)/2$ for the following values of $n$:
\begin{align*}
n = 31, \,&43, 67, 79, 131, 139, 151, 163, 191, 199, 211, 223, 283, 307, 331, \\
&367, 379, 439, 463, 487, 499, 523, 547, 607, 619, 643, 691, 727,\\
&739, 751, 787, 811, 823, 859, 907, 919, 967.
\end{align*}
\end{thm}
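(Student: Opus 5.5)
By Lemma \ref{lem:orthsaregood}(2) and Definition \ref{def: witness}, it suffices to exhibit, for each listed $n$, one witness $\pi \in A_n$ and to check it. For each $n$ the check runs over all $0 \le j \le n-1$ and confirms three things about $c^j\pi$: it has a unique fixed point; every other point lies in a cycle of even length; and the number of even-length cycles is even. All the listed values are primes $p \equiv 3 \pmod 4$, so none of Theorem \ref{thm: exponent sum}, Theorem \ref{thm:pqfulldiam} or Corollary \ref{cor: Claude corollaries} applies. The witnesses therefore have to be produced directly, modelled on Examples \ref{ex: n=19} and \ref{ex: n=31}.

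\textbf{Construction.} For each prime $p$ in the list, I would take a subgroup $K \le \Z_p^\times$ of even order $2s$ with $-1 \in K$, typically $|K| = 6$ as in the examples. I would fix coset representatives $r_1 = 1, r_2, \ldots, r_t$ of $K$ and define
\[
x^\pi = \begin{cases} 0, & x = 0,\\ \lambda_i x, & x \in r_iK, \end{cases}
\]
where $\lambda_1 = -1$. The multipliers $\lambda_i$ are chosen so that the map permutes the cosets, i.e.\ $\lambda_i r_i K = r_{\tau(i)}K$ for some permutation $\tau$ of the cosets. Bijectivity then holds automatically. The orthomorphism condition (equivalently, unique fixed points in every $c^j\pi$, by Lemma \ref{lem:orthsaregood}(1)) reduces to requiring that the sets $(\lambda_i - 1)r_iK$ partition $\Z_p^\times$. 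This is a finite combinatorial condition on the $\lambda_i$ modulo $K$, in the style of \cite{BorsWang2022}. I would search over the choices of $(\lambda_i)$ by computer (GAP), keeping only the candidates that satisfy it.

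\textbf{Verification.} For each surviving candidate, I would compute the cycle type of $c^j\pi$ for all $j$ and test conditions (i)--(iii) of Lemma \ref{lem:orthsaregood}(2). Linearity on cosets does not make $c^j\pi$ conjugate to $\pi$, unlike the situation in Lemma \ref{lem:goodlinear}(2), so this really is a full check over $n$ cosets. The cost is $O(n^2)$ per candidate, which is trivial for $n < 1000$.

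The main obstacle is not the verification but finding candidates at all: the parity conditions on the cycles of every translate $c^j\pi$ are not controlled by the coset structure. If the search with $|K| = 6$ fails for some $p$, I would next try other even subgroup orders dividing $p-1$ and allow non-multiplicative pieces, such as affine maps on cosets. As a last resort I would run a randomized local search directly over orthomorphisms with the prescribed cycle type, scoring each candidate by the number of $j$ that fail conditions (ii)--(iii). The proof itself then consists of recording the witnesses, for instance in the repository cited in Theorem \ref{thm:largesporadicn}, and stating that the verification is a direct computation.
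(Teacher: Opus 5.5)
Your proposal matches the paper's proof: the paper also proves this theorem purely by exhibiting an explicit witness for each listed prime and checking conditions (i)--(iii) of Lemma \ref{lem:orthsaregood}(2) by computer. Its witnesses were found with Claude, are piecewise linear on cosets of a subgroup of $\Z_p^\times$ as in Example \ref{ex: n=31}, and are recorded in the paper's online data file. Note that your search is not guaranteed to succeed in advance, so your argument becomes a proof only once the witnesses themselves are written down, which is exactly what the paper supplies by citing that data.
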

\begin{proof}
See the witnesses given in \url{https://github.com/EricSwartz/CircularSorting/blob/main/ClaudeWitnesses.txt}.
\end{proof}

\section{Concluding remarks}

We end with some questions and remarks.

\begin{itemize}
 \item The exact circular sorting number in $A_n$ is still unknown for infinitely many values of $n$. The smallest prime for which $h(n)$ is unknown is $n=47$, the smallest composite integer for which $h(n)$ is unknown is $n=99$, and the values of $n$ less than $100$ for which $h(n)$ is unknown are $n = 47, 59, 83,$ and $99$.  It would be interesting to determine whether the sorting numbers for these values are also consistent with Conjecture \ref{conj: n=3 mod 4}.
 
 \item In the case of circular sorting with non-adjacent transpositions, exact results are known only in the case of primes and for a very restricted number of composite integers.  In contrast, the $n \equiv 3 \pmod 4$ case has been resolved for a large number of composite integers but is wide open for the case of prime integers in general (and more or less reduces to the case of primes congruent to $3 \pmod 4$; see Remark \ref{rem: remaining cases}).

 \item Even if Conjecture \ref{conj: n=3 mod 4} is false, it would be interesting to determine whether there are infinitely many primes $p \equiv 3 \pmod 4$ such that $h(p) = (p-1)/2$.  One possible path to resolving this would be to prove that specific permutations constructed as in \cite{BorsWang2022} (or Example \ref{ex: n=31}) work infinitely often.  Perhaps it is possible to accomplish this by taking a fixed value of $d$ (say $d = 3$) and a fixed \textit{special permutation} (say $\psi = (0)(1, \, 2)$; see \cite[Section 2]{BorsWang2022}) and proving that the permutations constructed this way give rise to a witness infinitely often.  Since by \cite[Theorem 1.1.1]{BorsWang2022} the permutations constructed this way will be orthomorphisms for $n$ sufficiently large, such an approach is not without hope.

 \item In order to fully resolve Conjecture \ref{conj: n=3 mod 4} using techniques similar to those of the present paper, it is necessary to construct orthomorphisms where the cycle structure is restricted for all elements of the coset $[\pi]$.  This appears to be quite difficult in general, especially since our computational work has exhibited witnesses $\pi$ such that (in contrast to Lemma \ref{lem:goodlinear}) elements of $[\pi]$ have differing cycle structures.

\end{itemize}

\section*{Declaration of generative AI use}

We used Claude Fable 5 to check our arguments and find typos.  It suggested a correction for an error in the last paragraph of the proof of Lemma \ref{lem:ordersandorbits}, and this was incorporated into the paper.  Additionally, the results given in Section \ref{sec: Claude} were obtained with the assistance of Claude Fable 5 and Claude Opus 5.  All other arguments in the paper are due to the authors, and the authors assume responsibility for all content in the paper.  \\

\bibliographystyle{plainurl}
\bibliography{references}

\end{document}